\pgfplotsset{compat=1.17}
\newcommand*{\email}[1]{%
    \href{mailto:#1}{#1}\par
    }
\newcommand{\eqbydef}{\mathrel{\mathop:}=}
\DeclareDocumentCommand{\hcancel}{mO{0pt}O{0pt}O{0pt}O{0pt}}{%
    \tikz[baseline=(tocancel.base)]{
        \node[inner sep=0pt,outer sep=0pt] (tocancel) {\tiny#1};
        \draw ($(tocancel.south west)+(#2,#3)$) -- ($(tocancel.north east)+(#4,#5)$);
    }%
}%
\newcommand{\brokenPolyT}[2]{\mathbb{P}^{#2}(\mathcal{T}_{#1})}
\newcommand{\brokenPolyF}[2]{\mathbb{P}^{#2}(\mathcal{F}_{#1})}
\newcommand{\traceSpace}[0]{H^{\nicefrac{1}{2}}(\partial\Omega)}
\newcommand{\normalderSpace}[0]{H^{\nicefrac{-1}{2}}(\partial\Omega)}
\newcommand{\hybridSpace}[0]{\underline{U}_h}
\newcommand{\hybridSpaceDirich}[1]{\underline{U}_{h,#1}}
\newcommand{\locHybridSpace}[0]{\underline{U}_T}
\newtheorem{theorem}{Theorem}
\newtheorem{lemma}{Lemma}
\newtheorem{remark}{Remark}
\newcommand{\cells}[0]{\mathcal{T}_h}
\newcommand{\bcells}[0]{\mathcal{T}_h^{\mathrm{B}}}
\newcommand{\facesT}[0]{\mathcal{F}_T}
\newcommand{\faces}[0]{\mathcal{F}_h}
\newcommand{\ifaces}[0]{\mathcal{F}_h^{\mathrm{I}}}
\newcommand{\bfaces}[0]{\mathcal{F}_h^{\mathrm{B}}}
\newcommand{\rcvT}[0]{\Theta_T}
\newcommand{\rcvtf}[0]{\Theta_{\cells}}
\newcommand{\homogC}[2]{\mathring{#1}({#2})}
\newcommand{\homog}[3]{\mathring{#1}_{#2}^{#3}}
\newcommand{\normalder}[1]{\partial_{\mathrm{n}} #1}
\newcommand{\flux}[0]{\partial_{\mathrm{n},h}}
\newcommand{\stabcellIP}[1]{(#1)^{\star}}
\definecolor{forestgreen}{rgb}{0.13, 0.55, 0.13}
\algnewcommand{\LineComment}[1]{\Statex \(\triangleright\) #1}
\newcommand{\lambdabf}[0]{\boldsymbol{\lambda}}
\def\convergencePlotWidth{6.5cm}
\def\convergencePlotHeight{5.0cm}
\newcommand{\logLogSlopeTriangle}[5]
{

    \pgfplotsextra
    {
        \pgfkeysgetvalue{/pgfplots/xmin}{\xmin}
        \pgfkeysgetvalue{/pgfplots/xmax}{\xmax}
        \pgfkeysgetvalue{/pgfplots/ymin}{\ymin}
        \pgfkeysgetvalue{/pgfplots/ymax}{\ymax}

        \pgfmathsetmacro{\xArel}{#1}
        \pgfmathsetmacro{\yArel}{#3}
        \pgfmathsetmacro{\xBrel}{#1-#2}
        \pgfmathsetmacro{\yBrel}{\yArel}
        \pgfmathsetmacro{\xCrel}{\xArel}

        \pgfmathsetmacro{\lnxB}{\xmin*(1-(#1-#2))+\xmax*(#1-#2)} 
        \pgfmathsetmacro{\lnxA}{\xmin*(1-#1)+\xmax*#1} 
        \pgfmathsetmacro{\lnyA}{\ymin*(1-#3)+\ymax*#3} 
        \pgfmathsetmacro{\lnyC}{\lnyA+#4*(\lnxA-\lnxB)}
        \pgfmathsetmacro{\yCrel}{\lnyC-\ymin)/(\ymax-\ymin)} 

        \coordinate (A) at (rel axis cs:\xArel,\yArel);
        \coordinate (B) at (rel axis cs:\xBrel,\yBrel);
        \coordinate (C) at (rel axis cs:\xCrel,\yCrel);

        \draw[#5]   (A)-- 
                    (B)-- 
                    (C)-- node[pos=0.5,anchor=west] {#4}
                    cycle;
    }
}
\newcommand{\todoNote}[1]%
{\todo[color=yellow!25]{#1}}
\newcommand{\fundingtext}[0]{\textbf{Funding:} This research has been partially funded by the European Union (ERC, NEMESIS, project number 101115663). Views and opinions expressed are however those of the author(s) only and do not necessarily reflect those of the European Union or the European Research Council Executive Agency.
PFA and MV have been partially funded by PRIN2020 n. 20204LN5N5 \emph{``Advanced polyhedral discretisations of heterogeneous PDEs for multiphysics problems''} research grant, funded by the Italian Ministry of Universities and Research (MUR). The present research is part of the activities of ``Dipartimento di Eccelllenza 2023-2027''. PFA and MV are members of INdAM-GNCS.}
\title{Iterative solution to the biharmonic equation in mixed form discretized by the Hybrid High-Order method\footnote{\fundingtext}}
\author{P.\ F.\ Antonietti\thanks{MOX, Department of Mathematics, Politecnico di Milano, Italy (\email{paola.antonietti@polimi.it}, \email{pierre.matalon@gmail.com}, \email{marco.verani@polimi.it})}, 
    P.\ Matalon\footnotemark[2]~\footnote{Corresponding author}, 
    M.\ Verani\footnotemark[2]}
\date{}
\begin{document}

\maketitle

\begin{abstract}
We consider the solution to the biharmonic equation in mixed form discretized by the Hybrid High-Order (HHO) methods. 
The two resulting second-order elliptic problems can be decoupled via the introduction of a new unknown, corresponding to the boundary value of the solution of the first Laplacian problem.
This technique yields a global linear problem that can be solved iteratively via a Krylov-type method. More precisely, at each iteration of the scheme, two second-order elliptic problems have to be solved, and a normal derivative on the boundary has to be computed.
In this work, we specialize this scheme for the HHO discretization.
To this aim, an explicit technique to compute the discrete normal derivative of an HHO solution of a Laplacian problem is proposed.
Moreover, we show that the resulting discrete scheme is well-posed.
Finally, a new preconditioner is designed to speed up the convergence of the Krylov method.
Numerical experiments assessing the performance of the proposed iterative algorithm on both two- and three-dimensional test cases are presented.
\end{abstract}

\small\textbf{Keywords: }partial differential equations, biharmonic equation, hybrid high-order.

\section{Introduction}

Let $\Omega \subset \mathbb{R}^d$, $d\in \{2, 3\}$, be an open, bounded, polyhedral domain with smooth boundary $\partial\Omega$. In this work, we address the numerical approximation of the solution to the biharmonic equation
\begin{subequations} \label{eq:biharmonic}
\begin{alignat}{2} 
    \Delta^2 \psi = \mathsf{f} \quad &\text{in } \Omega, \label{eq:double_lap} \\
             \psi = \mathsf{g}_\mathrm{D},\; \normalder{\psi} = \mathsf{g}_\mathrm{N} \quad &\text{on } \partial\Omega, \label{eq:bihar_bc}
\end{alignat}
\end{subequations}
where the load function $\mathsf{f} \in L^2(\Omega)$, $\mathsf{g}_\mathrm{D} \in \traceSpace$ and $\mathsf{g}_\mathrm{N} \in \normalderSpace$ are prescribed. As usual, for $X\subset\overline{\Omega}$ and $s\in\mathbb{R}$, we denote by $H^s(X)$ the standard Sobolev space of index $s$.
For all $\mathsf{g}\colon \partial\Omega \to \mathbb{R}$, we define the subspace $H_\mathsf{g}^s(\Omega) \eqbydef \{v \in H^s(\Omega) \mid v_{|\partial\Omega}=\mathsf{g}\}$.
We denote by $(\cdot, \cdot)$ (resp.\ $\langle\cdot, \cdot\rangle$) the standard $L^2$-inner product in $\Omega$ (resp.\ on $\partial\Omega$).
Moreover, $\normalder{\cdot}$ denotes the outer normal derivative on $\partial\Omega$.

Equation \eqref{eq:biharmonic} typically models the bending of a clamped plate supporting a load. 
By introducing the unknown $\omega := -\Delta \psi$, \eqref{eq:double_lap} can be rewritten into two second-order elliptic equations, yielding the mixed formulation
\begin{equation} \label{eq:mixed}
\begin{aligned}
    -\Delta \omega &= \mathsf{f} && \text{in } \Omega, \\
    -\Delta \psi   &= \omega     && \text{in } \Omega, \\
    \psi = \mathsf{g}_\mathrm{D}, \; \normalder{\psi} &= \mathsf{g}_\mathrm{N} && \text{on } \partial\Omega.
\end{aligned}
\end{equation}
This mixed form naturally arises in fluid dynamics, where $\psi$ represents the stream and $\omega$ the vorticity. In the plate bending model, $\psi$ represents the deflection and $\omega$ the bending moment or shear resultant force. 
There are several advantages in employing the mixed formulation \eqref{eq:mixed} over the primal form~\eqref{eq:biharmonic}.
First, while the weak solution of the primal form~\eqref{eq:biharmonic} is to be found in $H^2(\Omega)$, that of the mixed one lies in $H^1(\Omega)$, for which approximation spaces are easier to be constructed. 
Second, the splitting \eqref{eq:mixed} allows, after introducing an additional unknown, the use of fast, scalable solvers available for second-order elliptic equations. 
In particular, we will consider here the Hybrid High-Order (HHO) method \cite{di_pietro_hybrid_2020,di_pietro_lemaire_2014,di_pietro_hybrid_2015}, a non-conforming polyhedral discretization allowing arbitrary polynomial degrees of approximation, and exhibiting optimal convergence rates. 
In this context, the mixed form \eqref{eq:mixed} allows to take advantage of the recent works on multigrid methods designed for the diffusion equation discretized by HHO schemes; see \cite{matalon_h-multigrid_2021,di_pietro_towards_2021,di_pietro_algebraic_2021,di_pietro_high_order_2022}.

Recently, ad-hoc HHO discretizations of the primal formulation~\eqref{eq:biharmonic} have been designed in \cite{dong_hybrid_2022}, and \cite{dong_hybrid_2022_2} generalizes the methods to other boundary conditions. 
See also \cite{dong_hybrid_2021} for HHO discretizations of singularly perturbed fourth-order problems.
Previously, related models have also been tackled with HHO methods, such as the Kirchhoff–Love plate bending model problem in \cite{bonaldi_hybrid_2018}, and the Cahn–Hilliard equation in \cite{chave_hybrid_2016}. 
The mixed form \eqref{eq:mixed} leads to a saddle-point algebraic system, for which special solvers and preconditioners have been proposed, e.g., in \cite{gustafsson_preconditioned_1984,van_gijzen_conjugate_1995,mihajlovic_efficient_2004}.
Nonetheless, in this form, the Laplacian equations are coupled, preventing the use of fast and scalable solvers specifically designed for symmetric positive-definite (SPD) matrices. 
To address this issue, techniques introducing new variables have been designed in order to transform \eqref{eq:mixed} into a series of decoupled problems, whose associated algebraic formulations lead to symmetric and positive-definite algebraic problems.
Among them, we mention the method proposed by Glowinski, Ciarlet, Raviart and Pironneau \cite{ciarlet_mixed_1974,CIARLET1975277,glowinski_numerical_1979}, where the authors introduce the unknown $\lambda \eqbydef \omega_{|\partial\Omega}$. 
In that setting, one solves a sequence of Dirichlet problems and iteratively improves the solution as to enforce the prescribed Neumann condition.
 In a symmetric fashion, the technique proposed by Falk \cite{falk_approximation_1978} introduces $\lambda \eqbydef \normalder{\omega}$, and one solves a sequence of Neumann problems while iteratively improving the solution as to enforce the prescribed Dirichlet condition. 
In both schemes, $\lambda$ is the solution of a linear, symmetric, elliptic equation of the form $\mathcal{L}(\lambda) = b$, in which the evaluation of $\mathcal{L}$ involves the solution of two Laplacian problems. 
In the discrete setting, solving this equation then corresponds to solving an \emph{implicit} linear system, i.e.\ whose matrix is not explicitly known but one can compute its action on a vector. 
This specific configuration is well suited for the use of iterative methods where the operator is applied to a vector at each iteration, without requiring explicit knowledge of the matrix coefficients. Gradient descent algorithms and, more specifically, Krylov methods such as conjugate gradients are ideal candidates in this setting. 

In this work, we focus on the approach of Glowinski \emph{et al.} \cite{ciarlet_mixed_1974,CIARLET1975277,glowinski_numerical_1979}, that we recall in \Cref{sec:mixed_formulation}.
The use of an iterative method for the solution of the global linear problem yields an iterative scheme where each iteration consists of three steps: (i) the solution of a Laplace problem; (ii) the solution of a subsequent Laplacian problem, using the solution of step (i) as a source term; (iii) the computation of the normal derivative on the boundary of the solution at step (ii).
In \Cref{sec:hho_laplace}, we recall the HHO discretization of the Laplacian problem with Dirichlet boundary conditions. 
Then, \Cref{sec:normal_derivative} proposes a computable algorithm for the discrete normal derivative of the HHO solution of a diffusion problem. This step is indeed the main ingredient to define the global discrete scheme.

The work of Glowinski \emph{et al.}\ was applied to the standard Finite Element Method (FEM). 
While that setting does not raise any issue regarding the well-posedness of the discrete problem, as we will show in \Cref{sec:discrete_problem}, in the context of HHO, the resulting problem requires to be stabilized. A stabilization method is proposed and the well-posedness of the discrete problem is proved.

In the context of two-dimensional FEMs, a preconditioner ensuring a convergence rate independent of the mesh size, was proposed in \cite{peisker_numerical_1988}.
It is, however, restricted to two-dimensional problems.
In \Cref{sec:precond}, we therefore propose a novel preconditioner, applicable to general polytopal meshes.
The main idea consists in building an approximate, sparse matrix of the problem, where each column $j$ is computed by solving the two Laplacian problems and evaluating the normal derivative only in a restricted neighbourhood of the $j^{th}$ degree of freedom (DoF).
Finally, numerical experiments are reported in \Cref{sec:numerical_tests}. Various types of two- and three-dimensional meshes, including polygonal meshes, are used.
The scheme exhibits a convergence rate scaling as $\mathcal{O}(h^{k+2})$ in $L^2$-norm, where $k$ denotes the polynomial degree corresponding to the face unknowns of the HHO method.

\section{The continuous splitting of the biharmonic problem} \label{sec:mixed_formulation}

Following \cite{ciarlet_mixed_1974,CIARLET1975277,glowinski_numerical_1979}, we start from equation \eqref{eq:mixed} and we introduce the new unknown $\lambda := \omega_{|\partial\Omega} \in \traceSpace$. Supposing $\lambda$ known, $\omega$ and $\psi$ are successively recovered by solving the following Dirichlet problems:
\begin{subequations} \label{eq:g_subpbfinal}
\begin{equation} \label{eq:g_subpbfinal1}
\left\{
    \begin{alignedat}{2}
    -\Delta \omega &= \mathsf{f} &\quad& \textrm{ in } \Omega, \\
            \omega &= \lambda &\quad& \textrm{ on } \partial \Omega,
    \end{alignedat}
\right.
\end{equation}
\begin{equation} \label{eq:g_subpbfinal2}
\left\{
    \begin{alignedat}{2}
    -\Delta \psi &= \omega &\quad& \textrm{ in } \Omega, \\
            \psi &= \mathsf{g}_{\mathrm{D}} &\quad& \textrm{ on } \partial \Omega.
    \end{alignedat}
\right.
\end{equation}
\end{subequations}
While the Dirichlet datum $\mathsf{g}_{\mathrm{D}}$ is explicitly enforced on the solution in \eqref{eq:g_subpbfinal2}, the enforcement of the Neumann condition $\normalder{\psi} = \mathsf{g}_\mathrm{N}$ defines a problem in the unknown $\lambda$, which is derived in the following way.
For all $\mu \in \traceSpace$, we denote by $(\omega(\mu), \psi(\mu))$ the solution of
\begin{equation*}
\left\{
    \begin{alignedat}{2}
    -\Delta \omega(\mu) &= \mathsf{f} &\quad& \textrm{ in } \Omega, \\
            \omega(\mu) &= \mu &\quad& \textrm{ on } \partial \Omega,
    \end{alignedat}
\right.
\qquad\qquad
\left\{
    \begin{alignedat}{2}
    -\Delta \psi(\mu) &= \omega(\mu) &\quad& \textrm{ in } \Omega, \\
            \psi(\mu) &= \mathsf{g}_{\mathrm{D}} &\quad& \textrm{ on } \partial \Omega.
    \end{alignedat}
\right.
\end{equation*}
By construction, the solution $(\omega, \psi)$ of \eqref{eq:g_subpbfinal} corresponds to $(\omega(\lambda), \psi(\lambda))$. 
Solving \eqref{eq:g_subpbfinal} then boils down to finding $\lambda \in \traceSpace$ such that
\begin{equation} \label{eq:problemtheta}
    \normalder{\psi(\lambda)} = \mathsf{g}_{\mathrm{N}}.
\end{equation}
In order to derive a linear problem from \eqref{eq:problemtheta}, the constraints related to $\mathsf{f}$ and $\mathsf{g}_{\mathrm{D}}$ are eliminated through the introduction of $(\omega_0, \psi_0) \eqbydef (\omega(0), \psi(0))$ (the choice of $\mu=0$ is arbitrary). 
For all $\mu \in \traceSpace$, we then denote by $(\homogC{\omega}{\mu}, \homogC{\psi}{\mu})$ the solution of the following sequence of problems, with vanishing load and Dirichlet function:
\begin{subequations} \label{eq:subpb}
\begin{equation*} \label{eq:subpb1}
\left\{
    \begin{alignedat}{2}
    -\Delta \homogC{\omega}{\mu} &= 0 &\quad& \textrm{ in } \Omega, \\
            \homogC{\omega}{\mu} &= \mu &\quad& \textrm{ on } \partial \Omega,
    \end{alignedat}
\right.
\qquad\qquad
\left\{
    \begin{alignedat}{2}
    -\Delta \homogC{\psi}{\mu} &= \homogC{\omega}{\mu} &\quad& \textrm{ in } \Omega, \\
            \homogC{\psi}{\mu} &= 0 &\quad& \textrm{ on } \partial \Omega,
    \end{alignedat}
\right.
\end{equation*}
\end{subequations}
respectively. 
Equation \eqref{eq:problemtheta} can now be reformulated as the linear problem
\begin{equation} \label{eq:g_lin_equation}
    \mathcal{L}(\lambda) = b, \qquad\qquad b := \normalder{\psi_0}-\mathsf{g}_{\mathrm{N}},
\end{equation}
where $\mathcal{L} \colon \traceSpace \to \normalderSpace$ is the linear operator defined such that for all $\mu \in \traceSpace$, 
\begin{equation} \label{eq:A_G}
    \mathcal{L}(\mu) := -\normalder{\homogC{\psi}{\mu}}.
\end{equation}
The operator $\mathcal{L}$ is proved to be continuous, symmetric and positive-definite in \cite[Lem.\ 2.1]{glowinski_numerical_1979}.

\section{HHO discretization of the Laplacian problem} \label{sec:hho_laplace}

In this section, we briefly recall (see, e.g., \cite[Chap.~2]{di_pietro_hybrid_2020} for extended details) the discrete HHO formulation of the following problem: find $u \colon \Omega \to \mathbb{R}$ such that
\begin{equation} \label{eq:poisson_pb}
\left\{
    \begin{alignedat}{2}
    -\Delta u &= f &\quad& \textrm{ in } \Omega, \\
            u &= g_\mathrm{D} &\quad& \textrm{ on } \partial\Omega,
    \end{alignedat}
\right.
\end{equation}
where $f \colon \Omega \to \mathbb{R}$ and $g_\mathrm{D} \in \traceSpace$.
A weak solution of \eqref{eq:poisson_pb} is obtained via the variational formulation: 
find $u\in H^1_{g_\mathrm{D}}(\Omega)$ such that
\begin{equation} \label{eq:laplace_weak}
    a(u, v) = (f, v) \qquad \forall v\in H_0^1(\Omega),
\end{equation}
where the bilinear form $a$ is such that $a(v, w) = (\nabla v, \nabla w)$, for all $v, w\in H^1(\Omega)$.

\subsection{Mesh definition and notation} \label{sec:mesh_definition}

Let the couple $(\cells, \faces)$ define a mesh of the domain $\Omega \subset \mathbb{R}^d$, $d\in\{1,2,3\}$: $\cells$ is a set of disjoint, open, polyhedral elements such that $\bigcup_{T\in \cells}\overline{T} = \overline{\Omega}$; $\faces$ is the set of element faces; $h \eqbydef \max_{T\in \cells} h_T$ with $h_T$ denoting the diameter of~$T\in\cells$. 
The mesh is assumed to match the geometrical requirements of \cite[Def.~1.4]{di_pietro_hybrid_2020} and, when asymptotic behaviours are studied, polytopal regular mesh sequences in the sense defined by \cite[Def.~1.9]{di_pietro_hybrid_2020} are considered.
Let $\bcells$ the subset of $\cells$ collecting the elements located at the boundary of the domain.
We also define the following subsets of $\faces$: 
\begin{itemize}
    \item $\ifaces$, collecting the interior faces;
    \item $\bfaces$, collecting the boundary faces;
    \item $\mathcal{F}_T$, collecting the faces of $T$, for all $T\in \cells$.
\end{itemize}
We denote by $\mathbf{n}_{\partial T}$ the unit normal vector to $\partial T$ pointing outward of $T$.
For all $T \in \cells$ (resp.\ $F\in \faces$), we denote by $(\cdot, \cdot)_T$ (resp.\ $\langle \cdot, \cdot\rangle_F$) the standard inner product of $L^2(T)$ (resp.\ $L^2(F)$) or $L^2(T)^d$. 
We also denote by $\langle \cdot, \cdot\rangle_{\facesT} \eqbydef \sum_{F\in\facesT} \langle \cdot, \cdot\rangle_F$

\subsection{Local and broken polynomial spaces}

The HHO method hinges on discrete unknowns representing polynomial functions local to elements and faces. 
So, for all $m \in \mathbb{N}_0$ and all $T\in \cells$ (resp.\ $F\in \faces$), we denote by $\mathbb{P}^m(T)$ (resp.\ $\mathbb{P}^m(F)$) the space spanned by the restriction to $T$ (resp.\ $F$) of $d$-variate polynomials of total degree $\leq m$. From these local polynomial spaces, we can construct the following broken polynomial spaces supported by the mesh and its skeleton:
\begin{align*}
    \brokenPolyT{h}{m} &\eqbydef \left\lbrace v_{\cells} \eqbydef (v_T)_{T\in\cells} \mid v_T \in \mathbb{P}^{m}(T) \;\; \forall T \in \cells \right\rbrace , \\
    \mathbb{P}^{m}(\mathcal{F}_h) &\eqbydef \left\lbrace v_{\mathcal{F}_h} \eqbydef (v_F)_{F\in\mathcal{F}_h} \mid v_F \in \mathbb{P}^m(F) \;\; \forall F \in \mathcal{F}_h \right\rbrace,
\end{align*}
respectively. 
The local space $\mathbb{P}^{m}(\mathcal{F}_T)$ is defined analogously for all $T\in\cells$.
For all cell or face $X$, we denote by $\pi_X^m \colon L^2(X) \to \mathbb{P}^m(X)$ the local $L^2$-orthogonal projector onto the space $\mathbb{P}^m(X)$. 
By patching up those local projectors, we denote by $\pi_{\cells}^m \colon L^2(\Omega) \to \mathbb{P}^m(\cells)$ the piecewise $L^2$-orthogonal projector onto $\mathbb{P}^m(\cells)$, and by $\pi_{\bfaces}^m \colon L^2(\partial\Omega) \to \mathbb{P}^m(\bfaces)$ the piecewise $L^2$-orthogonal projector onto $\mathbb{P}^m(\bfaces)$.

\subsection{Discrete hybrid formulation}

Given the polynomial degrees $k \in \mathbb{N}_0$ and $l \in \{k, k+1\}$.
The global and local spaces of \emph{hybrid} variables are defined as
\begin{align*} \label{eq:local_hybrid_space}
\hybridSpace &\eqbydef \left\lbrace \underline{v}_h \eqbydef \left( v_{\cells}, v_{\faces} \right) \in \brokenPolyT{h}{l} \times \brokenPolyF{h}{k} \right\rbrace, \\
\locHybridSpace &\eqbydef \left\lbrace \underline{v}_T \eqbydef \left( v_T, v_{\facesT} \right) \in \mathbb{P}^l(T)\times \brokenPolyF{T}{k} \right\rbrace \quad \forall T\in\cells,
\end{align*}
respectively. 
For any $\underline{v}_h \in \hybridSpace$, we denote by $\underline{v}_T \in \locHybridSpace$ its restriction to $T \in \cells$.
Boundary data are strongly accounted for in the following subspaces:
\begin{equation*}
    \brokenPolyF{h}{k,g_\mathrm{D}} \eqbydef \left\lbrace v_{\faces} \in \brokenPolyF{h}{k} \mid v_F = \pi_F^k\, g_\mathrm{D} \;\; \forall F \in \bfaces \right\rbrace,
    \qquad
    \hybridSpaceDirich{g_\mathrm{D}} \eqbydef \brokenPolyT{h}{l} \times \brokenPolyF{h}{k,g_\mathrm{D}}.
\end{equation*} 
In particular, homogeneous Dirichlet conditions are strongly enforced in $\hybridSpaceDirich{0}$.
The global HHO bilinear form associated to the variational formulation of problem \eqref{eq:poisson_pb} is defined as $a_h \colon \hybridSpace \times \hybridSpace \to \mathbb{R}$ such that
$a_h(\underline{v}_h, \underline{w}_h) \eqbydef \sum_{T \in \cells} a_T(\underline{v}_T, \underline{w}_T)$
where the local bilinear form $a_T \colon \locHybridSpace \times \locHybridSpace \to \mathbb{R}$ is defined as
\begin{equation} \label{eq:loc_bilin_form}
a_T(\underline{v}_T, \underline{w}_T) \eqbydef (\nabla p_T^{k+1}\underline{v}_T, \nabla p_T^{k+1}\underline{w}_T)_T + s_T(\underline{v}_T, \underline{w}_T) \qquad \forall T \in \cells, 
\end{equation}
In this expression, the first term is responsible for consistency while the second is required to ensure stability of the scheme. 
The consistency term involves the \emph{local potential reconstruction operator} $p_T^{k+1} \colon \locHybridSpace \to \mathbb{P}^{k+1}(T)$ defined such that, for all $\underline{v}_T \in \locHybridSpace$, it satisfies
\begin{subequations} \label{eq:reconstruct_operator}
\begin{empheq}[left = \empheqlbrace]{align}
&(\nabla p_T^{k+1}\underline{v}_T, \nabla w)_T = -(v_T, \Delta w)_T + \langle v_{\facesT}, \nabla w \cdot \mathbf{n}_{\partial T} \rangle_{\facesT} \qquad\quad \forall w \in \mathbb{P}^{k+1}(T), \label{eq:reconstruct_operator_gradient} \\
&(p_T^{k+1}\underline{v}_T, 1)_T = (v_T, 1)_T. \label{eq:reconstruct_operator_constant_fixing}
\end{empheq}
\end{subequations}
Given the local interpolant $\underline{v}_T \in \locHybridSpace$ of a function $v\in L^2(\Omega)$, $p_T^{k+1}$ reconstructs an approximation of $v$ of degree $k+1$.
By patching together local contributions, we define the global operator $p_h^{k+1}\colon \hybridSpace \to \mathbb{P}^{k+1}(\cells)$ such that $(p_h^{k+1} \underline{v}_h)_{|T} \eqbydef p_T^{k+1} \underline{v}_T$ for all $T\in\cells$.
The stabilization bilinear form $s_T$ depends on its arguments through the \emph{difference operators} $\delta_T:\locHybridSpace\to\mathbb{P}^k(T)$ and $\delta_{TF}:\locHybridSpace\to\mathbb{P}^k(F)$ for all $F\in\mathcal{F}_T$, defined such that, for all $\underline{v}_T\in\locHybridSpace$,
\begin{equation} \label{eq:diff_operators}
\text{
$\delta_T \underline{v}_T \eqbydef \pi_T^l(p_T^{k+1}\underline{v}_T - v_T)$ and
$\delta_{TF} \underline{v}_T \eqbydef \pi_F^k(p_T^{k+1}\underline{v}_T - v_F)$ for all $F \in \mathcal{F}_T$.
}
\end{equation}
These operators capture the higher-order correction that the reconstruction $p_T^{k+1}$ adds to the element and face unknowns, respectively.
A classical expression for $s_T \colon \locHybridSpace \times \locHybridSpace \to \mathbb{R}$ is
\begin{equation} \label{eq:stabilization}
    s_T(\underline{v}_T, \underline{w}_T) \eqbydef \sum_{F \in \mathcal{F}_T} h_F^{-1} \; \mathfrak{s}_{TF} (\underline{v}_T, \underline{w}_T), 
    \qquad\qquad
    \mathfrak{s}_{TF}(\underline{v}_T, \underline{w}_T) \eqbydef \langle(\delta_{TF} - \delta_T) \underline{v}_T, (\delta_{TF} - \delta_T) \underline{w}_T \rangle_F.
\end{equation}
If $l=k+1$, we can also use the simpler formula (used, e.g., in \cite{chave_hybrid_2016})
\begin{equation} \label{eq:stab2}
    \mathfrak{s}_{TF}(\underline{v}_T, \underline{w}_T) \eqbydef \langle \pi_F^k(v_T - v_F), \pi_F^k( w_T - w_F) \rangle_F. 
\end{equation}
The global HHO problem reads: find $\underline{u}_h \in \hybridSpaceDirich{g_\mathrm{D}}$ such that
\begin{equation} \label{eq:hho_formulation}
a_h(\underline{u}_h, \underline{v}_h) = (f, v_{\cells}) \qquad \forall  \underline{v}_h \in \hybridSpaceDirich{0}.
\end{equation}
The final approximation is obtained through the post-processing step
\begin{equation} \label{eq:higher_order_reconstruction}
    u_h \eqbydef p_h^{k+1}\underline{u}_h \in \brokenPolyT{h}{k+1}.
\end{equation}

\subsection{Local problems and cell unknown recovery operator}

It follows from this local construction that the cell unknowns are only locally coupled. 
For all $T\in\cells$, we define the linear operator $\rcvT \colon \brokenPolyF{T}{k} \to \mathbb{P}^{l}(T)$ such that for all $v_{\facesT} \in \brokenPolyF{T}{k}$, $\rcvT v_{\facesT}$ is the unique solution of the local problem
\begin{equation} \label{eq:local_subproblem_1}
    a_T((\rcvT v_{\facesT}, 0), (w_T, 0)) = - a_T((0, v_{\facesT}), (w_T, 0)) \qquad \forall w_T \in \mathbb{P}^{l}(T).
\end{equation}
We denote by $\rcvT$ the \emph{cell unknown recovery operator}.
In order to shorten the notation of the hybrid couple $\underline{v}_T := (\rcvT v_{\facesT}, v_{\facesT})$, we define the operator $\underline{\Theta}_T \colon \brokenPolyF{T}{k} \to \locHybridSpace$ such that for all $v_{\facesT}$,
\begin{equation} \label{eq:cell_rcv_hybrid}
\underline{\Theta}_T v_{\facesT} := (\rcvT v_{\facesT}, v_{\facesT}).
\end{equation}
Finally, the associated global operators $\rcvtf \colon \brokenPolyF{h}{k} \to \brokenPolyT{h}{l}$ and $ \underline{\Theta}_h \colon \brokenPolyF{h}{k} \to \hybridSpace$ are defined locally such that for all $T \in \cells$:
\begin{align}
(\rcvtf v_{\faces})_{|T} \eqbydef \rcvT v_{\facesT} \qquad \forall v_{\faces} \in \brokenPolyF{h}{k} \label{eq:cell_rcv_global}, \\
(\underline{\Theta}_h v_{\faces})_{|T\times\partial T} \eqbydef \underline{\Theta}_T v_{\facesT} \qquad \forall v_{\faces} \in \brokenPolyF{h}{k}. \nonumber
\end{align}
$\underline{\Theta}_h$ verifies the following useful property:

\begin{lemma} \label{lem:link_condensed_hybrid}
For all $v_{\faces} \in \brokenPolyF{h}{k}$ and $\underline{w}_h \in \hybridSpace$, it holds that
\begin{align} 
a_h(\underline{\Theta}_h v_{\faces}, \underline{w}_h) &= a_h(\underline{\Theta}_h v_{\faces}, \underline{\Theta}_h w_{\faces}). \label{eq:link_condensed_hybrid}
\end{align} 
\end{lemma}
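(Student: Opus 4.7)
The plan is to reduce the global identity to a cell-by-cell identity and then exploit the defining property of the cell unknown recovery operator $\rcvT$ on each element.

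First, I would observe that the local problem \eqref{eq:local_subproblem_1} can be rewritten, by bilinearity of $a_T$ in the first argument and by the definition \eqref{eq:cell_rcv_hybrid} of $\underline{\Theta}_T$, as
\begin{equation*}
a_T(\underline{\Theta}_T v_{\facesT},\, (w_T, 0)) = 0 \qquad \forall w_T \in \mathbb{P}^l(T).
\end{equation*}
This is the key orthogonality property: $\underline{\Theta}_T v_{\facesT}$ is $a_T$-orthogonal to every hybrid pair whose face component vanishes.

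Next, I would decompose an arbitrary $\underline{w}_T = (w_T, w_{\facesT}) \in \locHybridSpace$ as
\begin{equation*}
\underline{w}_T = \bigl(w_T - \rcvT w_{\facesT},\, 0\bigr) + \bigl(\rcvT w_{\facesT},\, w_{\facesT}\bigr) = \bigl(w_T - \rcvT w_{\facesT},\, 0\bigr) + \underline{\Theta}_T w_{\facesT}.
\end{equation*}
Since $w_T - \rcvT w_{\facesT} \in \mathbb{P}^l(T)$, applying $a_T(\underline{\Theta}_T v_{\facesT}, \cdot)$ and using the orthogonality property above yields
\begin{equation*}
a_T(\underline{\Theta}_T v_{\facesT}, \underline{w}_T) = a_T(\underline{\Theta}_T v_{\facesT}, \underline{\Theta}_T w_{\facesT}).
\end{equation*}

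Finally, I would sum this cell-wise identity over all $T\in\cells$, using the definitions of $a_h$ and of the global operator $\underline{\Theta}_h$ (whose restriction to $T$ coincides with $\underline{\Theta}_T w_{\facesT}$), to obtain the claimed identity \eqref{eq:link_condensed_hybrid}. There is no real obstacle here: the proof is essentially a direct unfolding of the definition of $\rcvT$, and the only nontrivial observation is the linear-algebraic decomposition of $\underline{w}_T$ that isolates a face-vanishing component orthogonal to $\underline{\Theta}_T v_{\facesT}$.
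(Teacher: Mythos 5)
Your proof is correct and follows essentially the same route as the paper's: both rest on the orthogonality property $a_T(\underline{\Theta}_T v_{\facesT},(w_T,0))=0$ derived from \eqref{eq:local_subproblem_1}, and both decompose $\underline{w}$ into a face-vanishing part plus a $\underline{\Theta}$-image (the paper just performs this split globally and in two steps, whereas you do it locally in one step before summing). No gap.
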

\begin{proof}
    See \Cref{annex}.
\end{proof}



\begin{remark} 
    (Static condensation) 
    In practice, problem \eqref{eq:hho_formulation} is solved through the equivalent, condensed formulation:
find $u_{\faces} \in \brokenPolyF{h}{k,g_\mathrm{D}}$ such that
\begin{equation} \label{eq:static_condensed_problem}
\widehat{a}_h(u_{\faces}, v_{\faces}) 
= 
(f, \rcvtf v_{\faces}) \qquad \forall v_{\faces} \in \brokenPolyF{h}{k,0},
\end{equation}
where $\widehat{a}_h \colon \brokenPolyF{h}{k} \times \brokenPolyF{h}{k} \to \mathbb{R}$ is such that for all $v_{\faces}, w_{\faces} \in \brokenPolyF{h}{k}$,
\begin{equation} \label{eq:static_condensed_bilin_form}
\widehat{a}_h(v_{\faces}, w_{\faces}) := a_h(\underline{\Theta}_h v_{\faces}, \underline{\Theta}_h w_{\faces}).
\end{equation}
Refer to \cite[Prop.\ 4]{cockburn_bridging_2016} for the proof.
\end{remark}
\section{Discrete normal derivative} \label{sec:normal_derivative}

In this section, we derive an approximation, in the HHO context, of the normal derivative on $\partial \Omega$. This formula will play a crucial role in the approximation scheme of the biharmonic problem. 



Let $u$ be the solution of the boundary value problem \eqref{eq:poisson_pb} and 
let $\mathcal{H} \colon \traceSpace \to H^1(\Omega)$ be a linear operator such that for all $v \in \traceSpace$ defined on the boundary, $\mathcal{H}v$ extends $v$ in the interior of $\Omega$. 
As the trace operator is surjective, such an operator exists.
By Green's formula, it holds that
\begin{equation*}
    \langle \normalder{u}, v \rangle = (\nabla u, \nabla \mathcal{H}v) + (\Delta u, \mathcal{H}v) \qquad \forall v \in \traceSpace,
\end{equation*}
where the $L^2(\partial\Omega)$-inner product notation $\langle \cdot, \cdot \rangle$ is employed here to denote a duality pairing.
Using the bilinear form $a(\cdot,\cdot)$, and given that $-\Delta u = f$, the above equation becomes
\begin{equation} \label{eq:normalder_ipp}
    \langle \normalder{u}, v \rangle = a(u, \mathcal{H}v) - (f, \mathcal{H}v) \qquad \forall v \in \traceSpace.
\end{equation}
In the literature, the operator $g_{\mathrm{D}} \mapsto \normalder{u}$ is called Poincaré-Steklov operator, or Dirichlet-to-Neumann map; see, e.g. \cite{spigler_theory_1991}.
Equation \eqref{eq:normalder_ipp} is the well-known variational formula for the computation of the normal derivative.
Now, given the solution $\underline{u}_h$ of the discrete problem \eqref{eq:hho_formulation}, we define its normal derivative on the boundary faces, denoted by $\flux(\underline{u}_h)\in \mathbb{P}^k(\bfaces)$, through a discrete counterpart of \eqref{eq:normalder_ipp} in the HHO setting. 
Namely, $\flux(\underline{u}_h)$ verifies
\begin{equation} \label{eq:flux_computation}
    \langle \flux(\underline{u}_h), v_{\bfaces} \rangle 
    = 
    a_h(\underline{u}_h, \underline{\mathcal{H}}_h v_{\bfaces}) - (f, \mathcal{H}_{\cells}v_{\bfaces}) \qquad \forall v_{\bfaces} \in \mathbb{P}^k(\bfaces),
\end{equation}
where $\underline{\mathcal{H}}_h$ is an extension/lifting operator expressed in the hybrid setting, i.e. 
\begin{equation*}
    \underline{\mathcal{H}}_h \eqbydef (\mathcal{H}_{\cells}, \mathcal{H}_{\faces}),\qquad
    \mathcal{H}_{\cells} \colon \mathbb{P}^k(\bfaces) \to \brokenPolyT{h}{l},\qquad
    \mathcal{H}_{\faces} \colon \mathbb{P}^k(\bfaces) \to \brokenPolyF{h}{k}.
\end{equation*}
For all $v_{\bfaces} \in \mathbb{P}^k(\bfaces)$ and $F\in\faces$, we define
\begin{equation*} 
    (\mathcal{H}_{\faces} v_{\bfaces})_{|F} \eqbydef
    \begin{cases}
        (v_{\bfaces})_{|F} & \text{ if } F \in \bfaces, \\
        0 & \text{ otherwise}.
    \end{cases}
\end{equation*}
Then, we set
\begin{equation*}
\mathcal{H}_{\cells} \eqbydef \rcvtf \mathcal{H}_{\faces},
\end{equation*}
where $\rcvtf$ is defined as in \eqref{eq:cell_rcv_global}.


\begin{remark}
(Condensed formula)
In order to facilitate the practical computation of $\flux(\underline{u}_h)$, equation \eqref{eq:flux_computation} can be rewritten in terms of the \emph{condensed} bilinear form $\widehat{a}_h$ (cf.\ \eqref{eq:static_condensed_bilin_form}). 
First of all, it follows from the definition of $\underline{\Theta}_h$ that 
$\underline{\mathcal{H}}_h = \underline{\Theta}_h \mathcal{H}_{\faces}$.
Then, by using the symmetry of $a_h$ with property \eqref{eq:link_condensed_hybrid} and the definition \eqref{eq:static_condensed_bilin_form} of $\widehat{a}_h$, we have
\begin{equation*}
    a_h(\underline{u}_h, \underline{\mathcal{H}}_h v_{\bfaces})
    =
    a_h(\underline{u}_h, \underline{\Theta}_h \mathcal{H}_{\faces}v_{\bfaces})
    =
    a_h(\underline{\Theta}_h u_{\faces}, \underline{\Theta}_h \mathcal{H}_{\faces}v_{\bfaces})
    =
    \widehat{a}_h(u_{\faces}, \mathcal{H}_{\faces}v_{\bfaces}).
\end{equation*}
Equation \eqref{eq:flux_computation} then becomes
\begin{equation} \label{eq:flux_computation_condensed}
    \langle \flux(\underline{u}_h), v_{\bfaces} \rangle 
    = 
    \widehat{a}_h(u_{\faces}, \mathcal{H}_{\faces}v_{\bfaces}) - (f, \mathcal{H}_{\cells} v_{\bfaces}) \qquad 
    \forall v_{\bfaces} \in \mathbb{P}^k(\bfaces).
\end{equation}
Notice that compared to \eqref{eq:flux_computation}, which requires the knowledge of $\underline{u}_h$, formula \eqref{eq:flux_computation_condensed} only involves $u_{\faces}$. Consequently, the following hold: (i) $u_{\cells}$ does not have to be computed, (ii) the matrix used to compute the first term is smaller, as the matrix representation of $\widehat{a}_h$ is a Schur complement where the cell unknowns have been eliminated. 
\end{remark}

The computation of the discrete normal derivative as proposed here is numerically validated in \Cref{sec:num_tests_normalder}. The experiments show a convergence in $\mathcal{O}(h^{k+1})$ in $L^2$-norm.

\begin{remark}
    (Choice of discrete normal derivative)
    In this scheme, the normal derivative on the boundary is defined by a discrete version of \eqref{eq:normalder_ipp}, i.e.\ through an integration by parts involving a lifting operator, which is the way employed by Glowinski \emph{et al.} in their original scheme \cite{glowinski_numerical_1979} with standard FEM.
    However, contrary to FEM, HHO methods natively include a flux formulation (cf.\ \cite[Lem.\ 2.25]{di_pietro_hybrid_2020}), which we could lean on instead of \eqref{eq:normalder_ipp}.
    However, the structure of formula \eqref{eq:normalder_ipp} directly interplays the two Laplacian problems in variational form, allowing a natural reformulation of the problem through a symmetric positive-definite bilinear form (see \cref{lem:lh_reformulation}).
\end{remark}

\section{Discrete HHO problem} \label{sec:discrete_problem}

In this section, the global, discrete realization of problem \eqref{eq:g_lin_equation} is formulated.

\subsection{Bilinear form}

In order to devise a stable problem, we will make use of the following bilinear form in the hybrid space $\hybridSpace$. For all $\underline{v}_h, \underline{w}_h \in\hybridSpace$, we introduce
\begin{equation} \label{eq:L2_like_innerprod}
    \stabcellIP{\underline{v}_h, \underline{w}_h} 
    \eqbydef 
    (v_{\cells}, w_{\cells}) 
    + \sum_{T \in \bcells} \sum_{F \in \mathcal{F}_T} h_F \; \mathfrak{s}_{TF}^\star(\underline{v}_T, \underline{w}_T),
    \qquad\quad
    \mathfrak{s}_{TF}^\star(\underline{v}_T, \underline{w}_T) \eqbydef \langle \pi_F^k(v_T - v_F), \pi_F^k( w_T - w_F) \rangle_F.
\end{equation}
Formula \eqref{eq:L2_like_innerprod} defines an inner product-like bilinear form based on the $L^2$-inner product of the cell unknowns, to which a stabilizing term has been added.
The stabilizing term is inspired from \cite[Eq.\ (26)]{chave_hybrid_2016}.
Remark that $\mathfrak{s}_{TF}^\star$ is the same as \eqref{eq:stab2}, that one can employ to stabilize the Laplacian bilinear form if $l=k+1$.
The scaling factor $h_F$ is selected to ensure dimensional homogeneity with the consistency term it is added to.
Note that $\stabcellIP{\cdot, \cdot}$ does not define an inner product in $\hybridSpace$, due to the fact that only the boundary cells are involved in the stabilizing term.
In spite of that, we allow ourselves the use of an inner product notation, since $\stabcellIP{\cdot, \cdot}$ shall undertake the role of the $L^2$-inner product in the discrete problem and, thus, carries the same semantics.

$\stabcellIP{\cdot, \cdot}$ remains symmetric and positive semi-definite.
If definiteness is not globally ensured, it is ensured ``locally'' in the cells where the stabilization term is applied. One might say that $\stabcellIP{\cdot, \cdot}$ enjoys a property of \emph{local stability} in the boundary cells, which is formalized by
\begin{lemma} (Local stability on the boundary)
\begin{equation} \label{eq:local_stability}
    \stabcellIP{\underline{v}_h, \underline{v}_h} = 0 
    \qquad \Longrightarrow \qquad
    \underline{v}_T = 0 \quad \forall T\in\bcells.
\end{equation}
\end{lemma}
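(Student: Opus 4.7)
The proof will be a direct unpacking of the definition of $\stabcellIP{\cdot, \cdot}$, exploiting that every term in the sum is non-negative.

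The plan is to observe that $\stabcellIP{\underline{v}_h, \underline{v}_h}$ is, by definition \eqref{eq:L2_like_innerprod}, a sum of squared $L^2$-norms (the cell term $(v_{\cells}, v_{\cells}) = \sum_{T\in\cells} \|v_T\|_{L^2(T)}^2$) and face contributions $h_F \|\pi_F^k(v_T - v_F)\|_F^2$ with strictly positive weights $h_F > 0$ ranging only over $T \in \bcells$ and $F \in \mathcal{F}_T$. Assuming $\stabcellIP{\underline{v}_h, \underline{v}_h} = 0$, each of these non-negative summands must individually vanish.

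First, from $(v_{\cells}, v_{\cells}) = 0$ I would conclude that $v_T = 0$ for every $T \in \cells$ — in particular, for every boundary cell. Second, for each $T \in \bcells$ and each $F \in \mathcal{F}_T$, I would deduce $\pi_F^k(v_T - v_F) = 0$; substituting $v_T = 0$ leaves $\pi_F^k v_F = 0$. Since $v_F \in \mathbb{P}^k(F)$ already, the projector $\pi_F^k$ acts as the identity on it, so $v_F = 0$. Thus $v_{\facesT} = 0$ for every $T \in \bcells$, and combined with $v_T = 0$ this yields $\underline{v}_T = (v_T, v_{\facesT}) = 0$.

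There is no real obstacle here; the argument is purely algebraic and relies solely on (i) positivity of the scaling factors $h_F$, (ii) the identity $\pi_F^k v_F = v_F$ for polynomials of degree $\le k$ on $F$, and (iii) the fact that vanishing of an $L^2$-norm of a polynomial implies vanishing of the polynomial itself. I would note in passing that nothing in the argument constrains $v_F$ on faces that do not touch any boundary cell, which is precisely why the stability property is only \emph{local} to $\bcells$ and not global on $\hybridSpace$ — consistent with the earlier remark that $\stabcellIP{\cdot,\cdot}$ is not an inner product on the full hybrid space.
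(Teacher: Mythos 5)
Your proof is correct and follows essentially the same route as the paper's: both exploit that all summands in $\stabcellIP{\underline{v}_h,\underline{v}_h}$ are non-negative, deduce $v_T=0$ from the cell term, and then use $\pi_F^k v_F = v_F$ to conclude $v_F=0$ on each face of a boundary cell. Your remark that $v_{\cells}$ actually vanishes on \emph{all} cells (not just boundary ones) is a minor, correct refinement the paper leaves implicit.
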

\begin{proof}
    Let $T\in\bcells$.
    Assume $\underline{v}_h \in \hybridSpace$ such that $\stabcellIP{\underline{v}_h, \underline{v}_h} = 0$.
    Then, by definition \eqref{eq:L2_like_innerprod},
    \begin{equation*}
        (v_T, v_T)_T  + \sum_{F \in \mathcal{F}_T} h_F \; \mathfrak{s}_{TF}^\star(\underline{v}_T, \underline{v}_T) = 0.
    \end{equation*}
    As $\mathfrak{s}_{TF}^\star$ is positive semi-definite, all terms are non-negative. 
    Consequently, $v_T = 0$ and $\mathfrak{s}_{TF}^\star(\underline{v}_T, \underline{v}_T) = 0$ for all $F\in\mathcal{F}_T$.
    We then have
    \begin{equation*}
        0 = \mathfrak{s}_{TF}^\star(\underline{v}_T, \underline{v}_T) = \| \pi_F^k(v_T - v_F) \|_F^2 = \| v_F \|_F^2,
    \end{equation*}
    where we have used the facts that $v_T = 0$ and $\pi_F^k v_F = v_F$.
    We conclude that $v_F = 0$.
\end{proof}


Let us now describe the discrete, variational form of the continuous operator \eqref{eq:A_G}.
For all $\mu \in \mathbb{P}^k(\bfaces)$, we denote by $(\homog{\underline{\omega}}{h}{\mu}, \homog{\underline{\psi}}{h}{\mu}) \in \hybridSpaceDirich{\mu} \times \hybridSpaceDirich{0}$ the solution of the discrete problems
\begin{subequations} \label{eq:discrete_mixed_problem}
\begin{align}
    a_h(\homog{\underline{\omega}}{h}{\mu}, \underline{v}_h) &= 0 &\forall \underline{v}_h \in \hybridSpaceDirich{0}, \label{eq:discrete_subproblem1}\\
    a_h(\homog{\underline{\psi}}{h}{\mu}, \underline{v}_h) &= \stabcellIP{\homog{\underline{\omega}}{h}{\mu}, \underline{v}_h} 
    &\forall \underline{v}_h \in \hybridSpaceDirich{0}.\label{eq:discrete_subproblem2}
\end{align}
\end{subequations}
For $\mu, \eta\in \mathbb{P}^k(\bfaces)$, and $(\homog{\underline{\omega}}{h}{\mu}, \homog{\underline{\psi}}{h}{\mu})$ the solution of \eqref{eq:discrete_mixed_problem} associated with $\mu$, we define the bilinear form $\ell_h \colon \mathbb{P}^k(\bfaces)\times \mathbb{P}^k(\bfaces) \to \mathbb{R}$ as the computation of $-\flux(\homog{\underline{\psi}}{h}{\mu})$ via formula \eqref{eq:flux_computation}, such that
\begin{equation} \label{eq:lh_definition}
    \ell_h (\mu, \eta) \eqbydef - a_h(\homog{\underline{\psi}}{h}{\mu}, \underline{\mathcal{H}}_h\eta)
    +
    \stabcellIP{\homog{\underline{\omega}}{h}{\mu}, \underline{\mathcal{H}}_h\eta}. 
\end{equation}
Remark that we have used $\stabcellIP{\cdot, \cdot}$ in \eqref{eq:discrete_subproblem2} and \eqref{eq:lh_definition} instead of $(\cdot, \cdot)$ in the reference formulas \eqref{eq:hho_formulation} and \eqref{eq:flux_computation}, respectively.
Given that formula \eqref{eq:lh_definition} does not explicitly exhibit symmetry or positive-definiteness, we prove an equivalent reformulation, easier to analyze.
\begin{lemma} \label{lem:lh_reformulation} (Reformulation of $\ell_h$)
For $\mu$ and $\eta \in \mathbb{P}^k(\bfaces)$, denote by $\homog{\underline{\omega}}{h}{\mu}$ and $\homog{\underline{\omega}}{h}{\eta}$ their associated solutions of \eqref{eq:discrete_subproblem1}, respectively.
Then 
\begin{equation} \label{eq:lh_reformulation}
    \ell_h(\mu, \eta) 
    =
    \stabcellIP{\homog{\underline{\omega}}{h}{\mu}, \homog{\underline{\omega}}{h}{\eta}}.
\end{equation}
\end{lemma}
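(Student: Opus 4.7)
The plan is to rearrange the definition \eqref{eq:lh_definition} of $\ell_h(\mu,\eta)$ by using the weak equations \eqref{eq:discrete_mixed_problem} (for both $\mu$ and $\eta$) to replace the lifting $\underline{\mathcal{H}}_h\eta$ wherever it appears by the homogeneous Dirichlet solution $\homog{\underline{\omega}}{h}{\eta}$, and to show that the spurious $a_h$-cross term that arises vanishes by Galerkin orthogonality.

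First I would observe that both $\underline{\mathcal{H}}_h\eta$ and $\homog{\underline{\omega}}{h}{\eta}$ carry the same Dirichlet face trace $\eta$: indeed, by the explicit definition of $\mathcal{H}_{\faces}$, the face component of $\underline{\mathcal{H}}_h\eta$ equals $\eta$ on $\bfaces$ (and vanishes on $\ifaces$), while $\homog{\underline{\omega}}{h}{\eta} \in \hybridSpaceDirich{\eta}$ by construction. Consequently
\[
\underline{\mathcal{H}}_h\eta - \homog{\underline{\omega}}{h}{\eta} \in \hybridSpaceDirich{0}.
\]

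Next I would test \eqref{eq:discrete_subproblem2} (written for $\mu$) against this admissible increment, which gives
\[
a_h\bigl(\homog{\underline{\psi}}{h}{\mu},\, \underline{\mathcal{H}}_h\eta - \homog{\underline{\omega}}{h}{\eta}\bigr)
= \stabcellIP{\homog{\underline{\omega}}{h}{\mu},\, \underline{\mathcal{H}}_h\eta - \homog{\underline{\omega}}{h}{\eta}}.
\]
Rearranging the terms so that the quantities appearing in \eqref{eq:lh_definition} are isolated on one side yields
\[
\ell_h(\mu,\eta)
= \stabcellIP{\homog{\underline{\omega}}{h}{\mu},\, \underline{\mathcal{H}}_h\eta} - a_h\bigl(\homog{\underline{\psi}}{h}{\mu},\, \underline{\mathcal{H}}_h\eta\bigr)
= \stabcellIP{\homog{\underline{\omega}}{h}{\mu},\, \homog{\underline{\omega}}{h}{\eta}} - a_h\bigl(\homog{\underline{\psi}}{h}{\mu},\, \homog{\underline{\omega}}{h}{\eta}\bigr).
\]

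Finally, the key step is to argue that the residual term $a_h(\homog{\underline{\psi}}{h}{\mu},\, \homog{\underline{\omega}}{h}{\eta})$ is zero. I would use symmetry of $a_h$ to swap the arguments and then apply equation \eqref{eq:discrete_subproblem1} written for $\eta$ (namely $a_h(\homog{\underline{\omega}}{h}{\eta}, \underline{v}_h) = 0$ for every $\underline{v}_h \in \hybridSpaceDirich{0}$) with the admissible test function $\underline{v}_h = \homog{\underline{\psi}}{h}{\mu} \in \hybridSpaceDirich{0}$. This immediately gives the identity \eqref{eq:lh_reformulation}.

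There is no real obstacle here: the entire argument is a careful bookkeeping of Dirichlet traces together with a Galerkin orthogonality. The only point that requires attention is the first step—verifying that $\underline{\mathcal{H}}_h\eta$ and $\homog{\underline{\omega}}{h}{\eta}$ share the same boundary face trace—which is a direct consequence of how $\mathcal{H}_{\faces}$ was defined in \Cref{sec:normal_derivative}.
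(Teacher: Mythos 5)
Your proposal is correct and follows essentially the same route as the paper's proof: decompose $\underline{\mathcal{H}}_h\eta$ against $\homog{\underline{\omega}}{h}{\eta}$ using the fact that their difference lies in $\hybridSpaceDirich{0}$, apply \eqref{eq:discrete_subproblem2} to that increment, and kill the remaining cross term $a_h(\homog{\underline{\psi}}{h}{\mu}, \homog{\underline{\omega}}{h}{\eta})$ via symmetry of $a_h$ and \eqref{eq:discrete_subproblem1} tested with $\homog{\underline{\psi}}{h}{\mu} \in \hybridSpaceDirich{0}$. No gaps.
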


\begin{proof}
For $\mu$ and $\eta \in \mathbb{P}^k(\bfaces)$, let $(\homog{\underline{\omega}}{h}{\mu}, \homog{\underline{\psi}}{h}{\mu})$ and $(\homog{\underline{\omega}}{h}{\eta}, \homog{\underline{\psi}}{h}{\eta})$ be their respective, associated solutions of \eqref{eq:discrete_mixed_problem}.
Let us start with the definition \eqref{eq:lh_definition} of $\ell_h$:
\begin{equation} \label{eq:lh:proof:1}
    \ell_h(\mu, \eta) 
    \eqbydef
    - a_h(\homog{\underline{\psi}}{h}{\mu}, \underline{\mathcal{H}}_h\eta)
    +
    \stabcellIP{\homog{\underline{\omega}}{h}{\mu}, \underline{\mathcal{H}}_h\eta}.
\end{equation}
By writing $\underline{\mathcal{H}}_h\eta = (\underline{\mathcal{H}}_h\eta - \homog{\underline{\omega}}{h}{\eta}) + \homog{\underline{\omega}}{h}{\eta}$, the first term becomes
\begin{equation} \label{eq:lh:proof:2}
    a_h(\homog{\underline{\psi}}{h}{\mu}, \underline{\mathcal{H}}_h\eta)
    =
    a_h(\homog{\underline{\psi}}{h}{\mu}, \underline{\mathcal{H}}_h\eta - \homog{\underline{\omega}}{h}{\eta})
    +
    a_h(\homog{\underline{\psi}}{h}{\mu}, \homog{\underline{\omega}}{h}{\eta}).
\end{equation}
As $\underline{\mathcal{H}}_h$ does not modify the boundary face unknowns, we have $\underline{\mathcal{H}}_h\eta  \in \hybridSpaceDirich{\eta}$. 
Additionally, $\homog{\underline{\omega}}{h}{\eta} \in \hybridSpaceDirich{\eta}$ by definition, so the difference $(\underline{\mathcal{H}}_h\eta - \homog{\underline{\omega}}{h}{\eta}) \in \hybridSpaceDirich{0}$. Consequently, as $\homog{\underline{\psi}}{h}{\mu}$ verifies \eqref{eq:discrete_subproblem2}, it holds that
\begin{equation} \label{eq:lh:proof:3}
    a_h(\homog{\underline{\psi}}{h}{\mu}, \underline{\mathcal{H}}_h\eta - \homog{\underline{\omega}}{h}{\eta}) 
    = 
    \stabcellIP{\homog{\underline{\omega}}{h}{\mu}, \underline{\mathcal{H}}_h\eta - \homog{\underline{\omega}}{h}{\eta}} 
    = \stabcellIP{\homog{\underline{\omega}}{h}{\mu}, \underline{\mathcal{H}}_h\eta} 
    - \stabcellIP{\homog{\underline{\omega}}{h}{\mu}, \homog{\underline{\omega}}{h}{\eta}}.
\end{equation}
For the second term of \eqref{eq:lh:proof:2}, given that $\homog{\underline{\omega}}{h}{\eta}$ is solution of \eqref{eq:discrete_subproblem1} and $\homog{\underline{\psi}}{h}{\mu} \in \hybridSpaceDirich{0}$, we can use $\homog{\underline{\psi}}{h}{\mu}$ as a test function in \eqref{eq:discrete_subproblem1} to show, since $a_h$ is symmetric, that
\begin{equation} \label{eq:lh:proof:4}
    a_h(\homog{\underline{\psi}}{h}{\mu}, \homog{\underline{\omega}}{h}{\eta}) = 0.
\end{equation}
Now, plugging \eqref{eq:lh:proof:3} and \eqref{eq:lh:proof:4} into \eqref{eq:lh:proof:2} gives
\begin{equation} \label{eq:lh:proof:5}
    a_h(\homog{\underline{\psi}}{h}{\mu}, \underline{\mathcal{H}}_h\eta)
    =
    \stabcellIP{\homog{\underline{\omega}}{h}{\mu}, \underline{\mathcal{H}}_h\eta} 
    - \stabcellIP{\homog{\underline{\omega}}{h}{\mu}, \homog{\underline{\omega}}{h}{\eta}} 
    .
\end{equation}
Finally, plugging \eqref{eq:lh:proof:5} into \eqref{eq:lh:proof:1} yields \eqref{eq:lh_reformulation}.
\end{proof}

Using formulation \eqref{eq:lh_reformulation}, we clearly have
\begin{theorem} \label{th:spd}
$\ell_h$, defined by \eqref{eq:lh_definition}, is symmetric and positive-definite.
\end{theorem}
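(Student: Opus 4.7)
The plan is to exploit the reformulation \eqref{eq:lh_reformulation} from \cref{lem:lh_reformulation}, which expresses $\ell_h(\mu,\eta)$ as $\stabcellIP{\homog{\underline{\omega}}{h}{\mu}, \homog{\underline{\omega}}{h}{\eta}}$. Via this identity, the two properties (symmetry, positive-definiteness) follow directly from the corresponding properties of $\stabcellIP{\cdot,\cdot}$, together with the observation that the map $\mu \mapsto \homog{\underline{\omega}}{h}{\mu}$ is linear (by linearity of \eqref{eq:discrete_subproblem1} with respect to the Dirichlet datum).

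First, I would establish symmetry. Since $\stabcellIP{\cdot,\cdot}$ is symmetric by \eqref{eq:L2_like_innerprod} (it is the sum of $L^2$-inner products and symmetric bilinear forms $\mathfrak{s}_{TF}^\star$), we have $\ell_h(\mu,\eta) = \stabcellIP{\homog{\underline{\omega}}{h}{\mu}, \homog{\underline{\omega}}{h}{\eta}} = \stabcellIP{\homog{\underline{\omega}}{h}{\eta}, \homog{\underline{\omega}}{h}{\mu}} = \ell_h(\eta,\mu)$.

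Next, I would show positive semi-definiteness, which is immediate: $\ell_h(\mu,\mu) = \stabcellIP{\homog{\underline{\omega}}{h}{\mu}, \homog{\underline{\omega}}{h}{\mu}} \geq 0$, since each term in \eqref{eq:L2_like_innerprod} is a squared norm.

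The only nontrivial step is definiteness, and this is where the boundary-stabilized nature of $\stabcellIP{\cdot,\cdot}$ is essential. Assume $\ell_h(\mu,\mu) = 0$. Then $\stabcellIP{\homog{\underline{\omega}}{h}{\mu}, \homog{\underline{\omega}}{h}{\mu}} = 0$, so the local stability property \eqref{eq:local_stability} yields $\homog{\underline{\omega}}{T}{\mu} = 0$ for every $T \in \bcells$. In particular, every boundary face unknown of $\homog{\underline{\omega}}{h}{\mu}$ vanishes. But $\homog{\underline{\omega}}{h}{\mu} \in \hybridSpaceDirich{\mu}$ means its boundary face unknowns equal $\pi_F^k\mu = \mu_{|F}$ for each $F \in \bfaces$. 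Hence $\mu = 0$ on $\partial\Omega$, concluding the proof. I do not expect any obstacle beyond this short chain of implications: the real work was done in \cref{lem:lh_reformulation} and in the local stability lemma \eqref{eq:local_stability}, which together make this theorem essentially a corollary.
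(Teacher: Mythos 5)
Your proposal is correct and follows essentially the same route as the paper: symmetry and positive semi-definiteness read off from the reformulation \eqref{eq:lh_reformulation}, and definiteness obtained by applying the local stability property \eqref{eq:local_stability} to conclude that the boundary face unknowns of $\homog{\underline{\omega}}{h}{\mu}$, which coincide with $\mu$ since $\homog{\underline{\omega}}{h}{\mu}\in\hybridSpaceDirich{\mu}$, must vanish. No gaps to report.
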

\begin{proof}
    Formulation \eqref{eq:lh_reformulation} clearly shows symmetry.
    Additionally, for all $\mu\in\mathbb{P}^k(\bfaces)$, we have
\begin{equation} \label{eq:lh:proof:7}
    \ell_h(\mu, \mu) 
    = \stabcellIP{\homog{\underline{\omega}}{h}{\mu}, \homog{\underline{\omega}}{h}{\mu}}.
\end{equation}
Since $\stabcellIP{\cdot, \cdot}$ is positive semi-definite, so is $\ell_h$. 
Now, assume $\ell_h(\mu, \mu) = 0$, which, by \eqref{eq:lh:proof:7}, means $\stabcellIP{\homog{\underline{\omega}}{h}{\mu}, \homog{\underline{\omega}}{h}{\mu}} = 0$.
The property of local stability \eqref{eq:local_stability} implies that, for all $T\in\bcells$, we have $\homog{\underline{\omega}}{T}{\mu} = 0$, i.e.\ $\homog{\omega}{T}{\mu} = 0$ and $\homog{\omega}{\facesT}{\mu} = 0$.
In particular, for all $F\in\bfaces$, $\mu_F = \homog{\omega}{F}{\mu} = 0$.
This being true for all $F\in\bfaces$, we have $\mu=0$, which proves that $\ell_h$ is positive-definite.
\end{proof}


\subsection{Discrete scheme}

Let $(\underline{\omega}_h^0, \underline{\psi}_h^0) \in \hybridSpaceDirich{0} \times \hybridSpaceDirich{\mathsf{g}_{\mathrm{D}}}$ be the solution of
\begin{subequations} \label{eq:disc_subpb_before}
\begin{align} 
    a_h(\underline{\omega}_h^0, \underline{v}_h) &= (\mathsf{f}, v_{\cells}) &\forall \underline{v}_h \in \hybridSpaceDirich{0},\\
    a_h(\underline{\psi}_h^0, \underline{v}_h) &= (\omega_{\cells}^0, v_{\cells}) &\forall \underline{v}_h \in \hybridSpaceDirich{0}.
\end{align}
\end{subequations}
The discrete formulation of problem \eqref{eq:g_lin_equation} reads: find $\lambda_{\bfaces} \in \mathbb{P}^k(\bfaces)$ such that
\begin{equation} \label{eq:discrete_pb}
    \ell_h (\lambda_{\bfaces}, \mu) = \langle \flux(\underline{\psi}_h^0) - \mathsf{g}_{\mathrm{N}}, \;\mu \rangle
    \qquad
    \forall \mu \in \mathbb{P}^k(\bfaces).
\end{equation}
We also define the linear operator $\mathcal{L}_h \colon \mathbb{P}^k(\bfaces) \to \mathbb{P}^k(\bfaces)$ associated to the bilinear form $\ell_h$ such that, for all $\mu \in \mathbb{P}^k(\bfaces)$,
\begin{equation} \label{eq:linear_operator}
    \langle \mathcal{L}_h\mu, \eta \rangle \eqbydef \ell_h(\mu, \eta) \qquad \forall \eta \in \mathbb{P}^k(\bfaces).
\end{equation}
Since $\ell_h$ is positive-definite, problem \eqref{eq:discrete_pb} is well-posed. Its solution exists and is unique. Once it is solved, we compute $(\underline{\omega}_h, \underline{\psi}_h) \in \hybridSpaceDirich{\lambda_{\bfaces}} \times \hybridSpaceDirich{\mathsf{g}_{\mathrm{D}}}$, solution of
\begin{subequations} \label{eq:disc_subpb_after}
\begin{align} 
    a_h(\underline{\omega}_h, \underline{v}_h) &= (\mathsf{f}, v_{\cells}) &\forall \underline{v}_h \in \hybridSpaceDirich{0},\\
    a_h(\underline{\psi}_h, \underline{v}_h) &= (\omega_{\cells}, v_{\cells}) &\forall \underline{v}_h \in \hybridSpaceDirich{0}.
\end{align}
\end{subequations}
Finally, the discrete approximation of $(\omega, \psi)$ is given by
\begin{equation*}
    (\omega_h, \psi_h) \eqbydef (p_h^{k+1}\underline{\omega}_h, p_h^{k+1}\underline{\psi}_h).
\end{equation*}
Note that while $\stabcellIP{\cdot, \cdot}$ is required within $\ell_h$ for problem \eqref{eq:discrete_pb} to be well-posed, it has to be used neither for the enforcement of the source functions in \eqref{eq:disc_subpb_before} and \eqref{eq:disc_subpb_after}, nor for the computation of $\flux(\underline{\psi}_h^0)$ (via formula~\eqref{eq:flux_computation}) in the right-hand side of \eqref{eq:discrete_pb}.

\section{Preconditioner} \label{sec:precond}
In this section, we introduce the preconditioned iterative strategy for the solution of the algebraic realization of \eqref{eq:discrete_pb}.
\subsection{Algebraic setting}

For a fixed basis for the discrete HHO spaces, let $N_{\mathrm{B}} \eqbydef \operatorname{dim}\left(\mathbb{P}^k(\bfaces)\right)$.
Introduce the algebraic counterpart $\mathbf{L}_h \colon \mathbb{R}^{N_{\mathrm{B}}} \to \mathbb{R}^{N_{\mathrm{B}}}$ of the linear operator $\mathcal{L}_h$ defined by \eqref{eq:linear_operator}.
We recall that $\mathbf{L}_h$ can be viewed as a matrix in $\mathbb{R}^{N_{\mathrm{B}} \times N_{\mathrm{B}}}$, whose coefficients are not explicitly known, but whose application to a vector can be computed.
The algebraic realization of problem \eqref{eq:discrete_pb} reads: find $\lambdabf\in\mathbb{R}^{N_{\mathrm{B}}}$ such that
\begin{equation} \label{eq:algebraic_pb}
    \mathbf{L}_h(\lambdabf) = \mathbf{b},
\end{equation}
where $\mathbf{b}\in\mathbb{R}^{N_{\mathrm{B}}}$ is an algebraic representation of $\flux(\underline{\psi}_h^0) - \mathsf{g}_{\mathrm{N}}$ in the chosen basis.

\subsection{An approximate, sparse matrix}

The implicit system \eqref{eq:algebraic_pb} is solved using a preconditioned, flexible conjugate gradient (PFCG) algorithm. This section describes the construction of the preconditioner.
The use of a flexible version of the Krylov method is justified by the preconditioner being non-symmetric \cite{blaheta_gpcg-generalized_2002,bouwmeester_nonsymmetric_2015}.
Refer to \Cref{sec:discuss_symmetry} for a discussion about the symmetric property of the preconditioner.

Introducing $\left(\mathbf{e}_j\right)_{j=1, \dots, N_{\mathrm{B}}}$ the canonical basis of $\mathbb{R}^{N_{\mathrm{B}}}$, 
the $j^{\text{th}}$ column of $\mathbf{L}_h$ is given by the evaluation of $\mathbf{L}_h(\mathbf{e}_j)$. Note that $\mathbf{L}_h(\mathbf{e}_j)$ is a dense vector. Consequently, the explicit computation of $\mathbf{L}_h$ would result in a dense matrix. 
The preconditioner proposed here consists in the explicit computation of a \emph{sparse} matrix $\widetilde{\mathbf{L}}_h$ approximating $\mathbf{L}_h$. 
In a nutshell, while the computation of each column $\mathbf{L}_h(\mathbf{e}_j)$ involves solving two Laplacian problems in the whole domain, our approximation $\widetilde{\mathbf{L}}_h(\mathbf{e}_j)$ consists in solving those problems in a restricted neighbourhood of the $j^{th}$ DoF.

Let $j \in \{1, \dots, N_{\mathrm{B}} \}$ a fixed unknown index. 
Let $F^j \in \bfaces$ the face supporting the DoF associated to~$j$, and $T^j \in \cells$ the unique element owning $F^j$. 
Define $\cells^j \subset \cells$ a neighbourhood of $T^j$: roughly, a set of elements around $T^j$, including $T^j$.
Let $\Omega^j$ be the open, connected set such that $\overline{\Omega^j} \eqbydef \cup_{T\in\cells^j} \overline{T}$.
\Cref{fig:precond_mesh_nbh} illustrates (in grey) such a neighbourhood, where
$F^j$ is represented by a thick line and $T^j$ by a darker triangle. 
Relatively to $\Omega^j$, one defines the sets of interior and boundary faces by $(\ifaces)^j \eqbydef \{ F\in\ifaces \mid F\subset \Omega^j\}$ and $(\bfaces)^j \eqbydef \{ F\in\faces \mid F\subset \partial\Omega^j\}$, respectively.

Consider the operator $\widetilde{\mathcal{L}}_h^j$, counterpart of $\mathcal{L}_h$ in $\Omega^j$, in which $(\cells, \ifaces, \bfaces)$ is replaced with $(\cells^j, (\ifaces)^j, (\bfaces)^j)$ for the solution of the Laplacian subproblems, and $\bfaces$ is replaced with $\bfaces \cap (\bfaces)^j$ for the computation of the normal derivative. 
This latter point indicates that the normal derivative is computed on $\partial\Omega^j \cap \partial\Omega$ only. 
For both subproblems, homogeneous Dirichlet conditions are enforced on the part of $\partial\Omega^j$ interior to $\Omega$. 
Define $N_{\mathrm{B}}^j \eqbydef \operatorname{dim}\mathbb{P}^k((\bfaces)^j)$, $\left(\widetilde{\mathbf{e}}_j\right)_{j=1, \dots, N_{\mathrm{B}}^j}$ the canonical basis of $\mathbb{R}^{N_{\mathrm{B}}^j}$, and $\widetilde{\mathbf{L}}_h^j \colon \mathbb{R}^{N_{\mathrm{B}}^j} \to \mathbb{R}^{N_{\mathrm{B}}^j}$ the algebraic representation of $\widetilde{\mathcal{L}}_h^j$. 

Supposing that the $j^{\text{th}}$ basis function of $\mathbb{P}^k(\bfaces)$ corresponds to the $m^{\text{th}}$ basis function of $\mathbb{P}^k((\bfaces)^j)$,
we compute $\widetilde{\mathbf{L}}_h^j(\widetilde{\mathbf{e}}_m)$.
As $\widetilde{\mathbf{L}}_h^j(\widetilde{\mathbf{e}}_m)$ yields the normal derivative only on $\partial\Omega^j \cap \partial\Omega$, we build the final, approximate column $\widetilde{\mathbf{L}}_h(\mathbf{e}_j)$ from $\widetilde{\mathbf{L}}_h^j(\widetilde{\mathbf{e}}_m)$ by setting zero coefficients where the domain boundary is not covered by the neighbourhood boundary. 

We claim that $\widetilde{\mathbf{L}}_h(\mathbf{e}_j)$ yields a sparse approximation of $\mathbf{L}_h(\mathbf{e}_j)$. 
To support our claim, in 
\Cref{fig:precond_lambda_nbh,fig:precond_u_nbh} we plot, in iso-value curves, the solutions to the first and second Laplacian problems computed in $\Omega^j$. 
Moreover, \Cref{fig:precond_lambda_dom,fig:precond_u_dom} plot the solutions of the Laplacian subproblems computed in the whole domain. 
We observe that, owing to the homogeneity of the first equation and the homogeneous Dirichlet condition everywhere except on one face, the solution culminates on that face and decreases towards zero as it goes further away. 
Roughly speaking, our strategy consists in approximating the solution locally in $\Omega^j$ while imposing it to be zero in the rest of the domain.
Then, this first truncated solution is used as the source function in the second problem, also solved in $\Omega^j$. 
Here, note that the quantity of interest is not the solution itself, but its normal derivative on the boundary. 
In particular, the neighbourhood of the considered face concentrates the most information, insofar as the solution flattens as it goes away from it. 
In that sense, 
$\widetilde{\mathbf{L}}_h(\mathbf{e}_j)$ yields a reasonable approximation of $\mathbf{L}_h(\mathbf{e}_j)$.

\begin{figure}
  \begin{center}
  	\begin{subfigure}[t]{0.32\textwidth}
		\centering
		\includegraphics[trim=240 95 240 95, clip, width=0.6\textwidth]{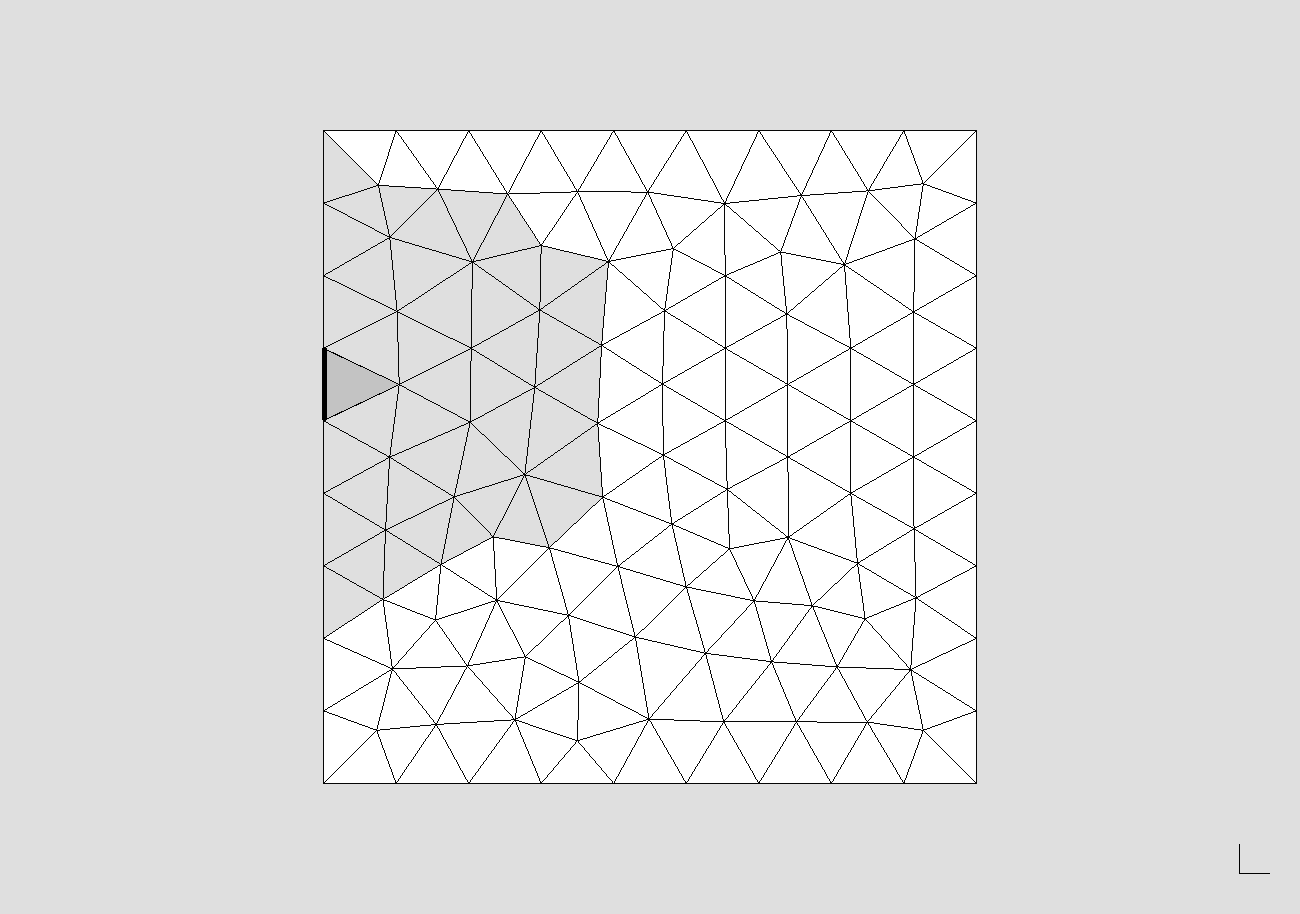}
		\caption{Edge $F^j$ (bold segment), element $T^j$ (dark grey) and neighbourhood $\Omega^{j}$ (light grey)}
	    \label{fig:precond_mesh_nbh}
    \end{subfigure}
    \hfill
    \begin{subfigure}[t]{0.32\textwidth} 
		\centering
		\includegraphics[trim=321 128 321 128, clip, width=0.6\textwidth]{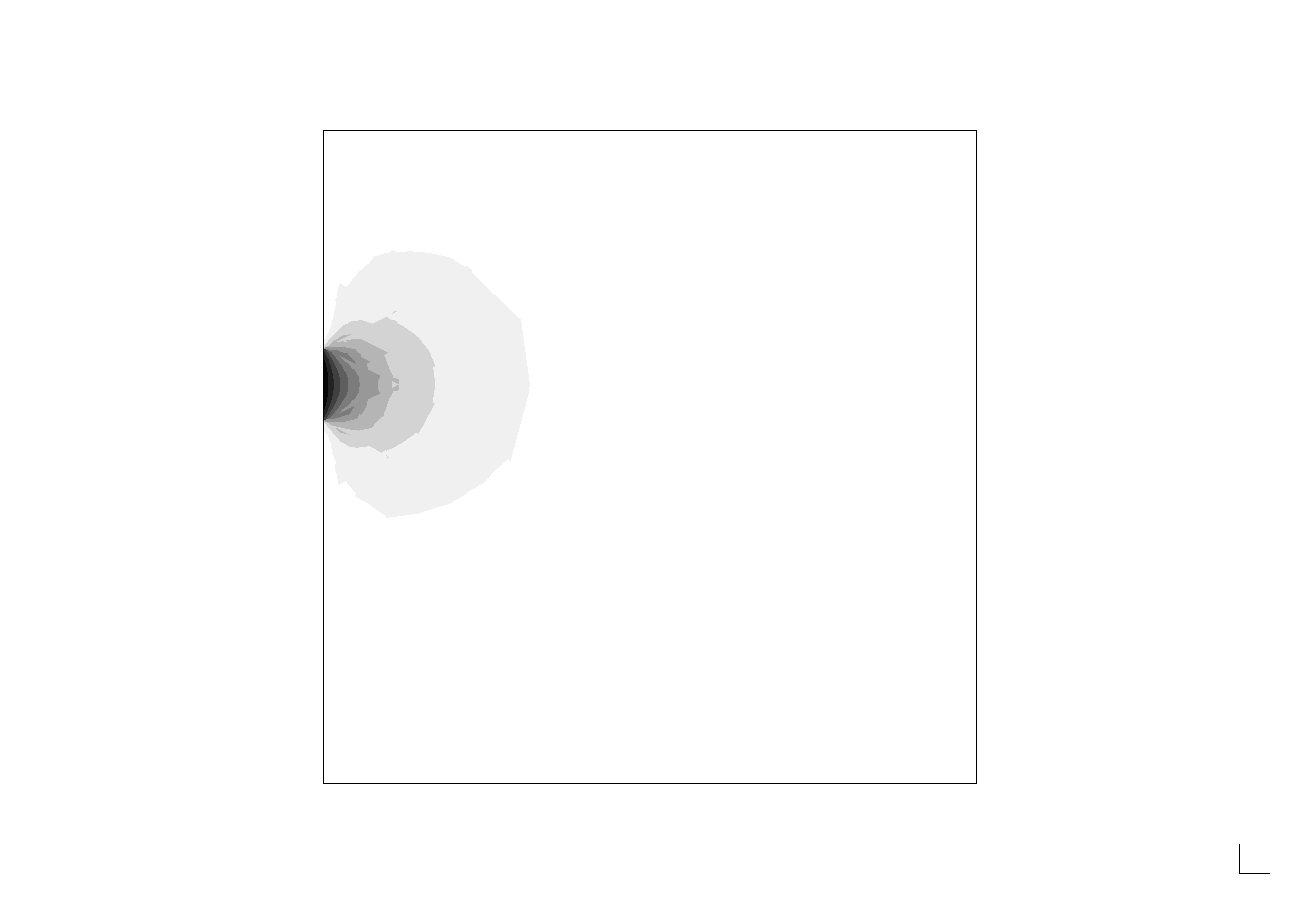}
		\caption{Solution of the first Laplacian problem computed in the neighbourhood}
	    \label{fig:precond_lambda_nbh}
    \end{subfigure}
    \hfill
    \begin{subfigure}[t]{0.32\textwidth} 
  		\centering
	    \includegraphics[trim=321 128 321 128, clip, width=0.6\textwidth]{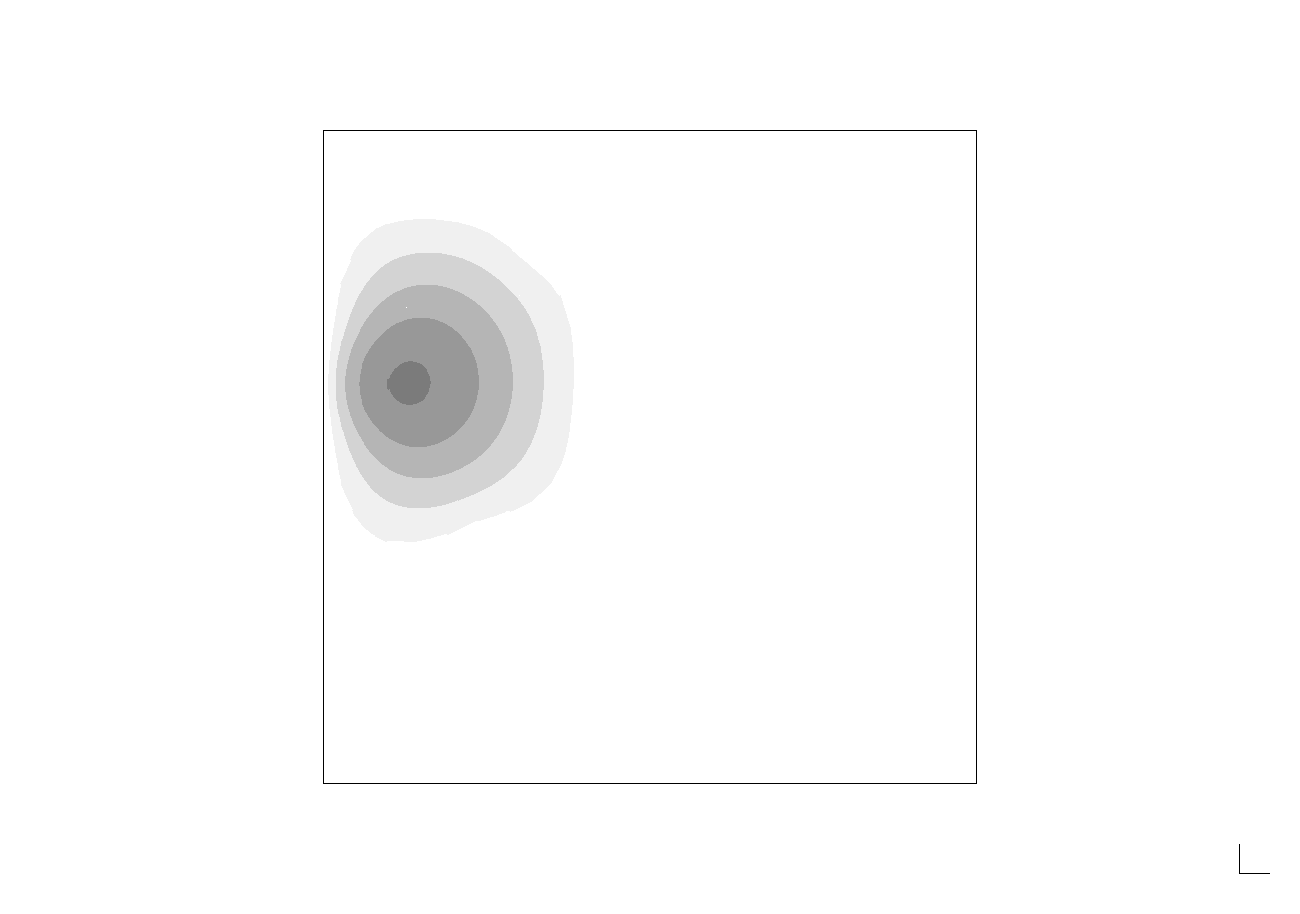}
	    \caption{Solution of the second Laplacian problem computed in the neighbourhood}
	    \label{fig:precond_u_nbh}
    \end{subfigure}
    \vskip\baselineskip
    \hspace{0.32\textwidth}
    \hfill
    \begin{subfigure}[t]{0.32\textwidth} 
		\centering
		\includegraphics[trim=321 128 321 128, clip, width=0.6\textwidth]{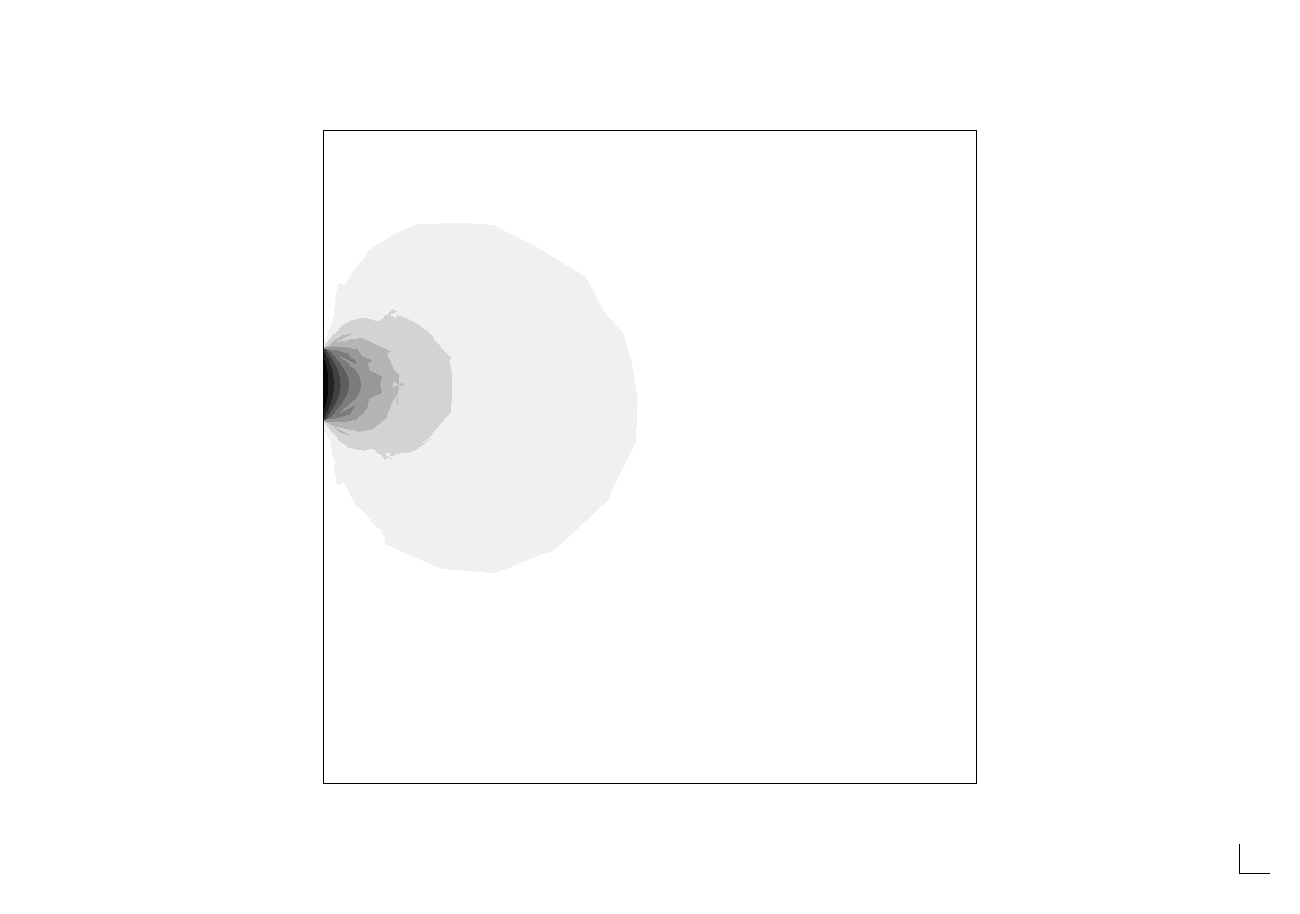}
		\caption{Solution of the first Laplacian problem computed in the whole domain}
	    \label{fig:precond_lambda_dom}
    \end{subfigure}
    \hfill
    \begin{subfigure}[t]{0.32\textwidth} 
  		\centering
	    \includegraphics[trim=321 128 321 128, clip, width=0.6\textwidth]{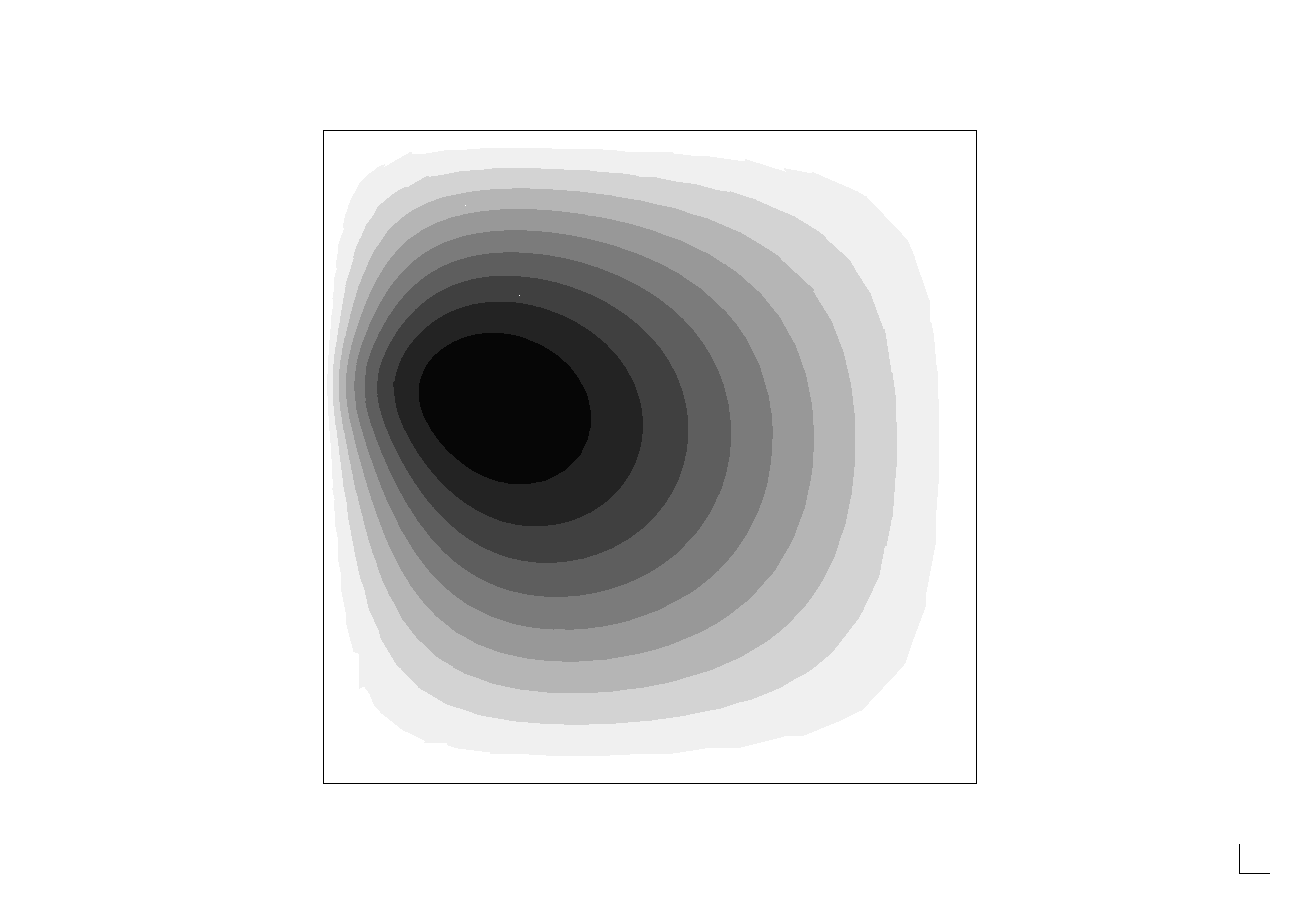}
	    \caption{Solution of the second Laplacian problem computed in the whole domain}
	    \label{fig:precond_u_dom}
    \end{subfigure}
    \caption{Illustration of the action of the preconditioner.}
    \label{fig:precond}
  \end{center}
\end{figure}

\subsection{Computation in practice} \label{sec:comput_in_practice}

Regarding the actual computation of $\widetilde{\mathbf{L}}_h$, one can note that the HHO matrix blocks used to solve the Laplacian problems in $\Omega^j$ and to compute the normal derivative can be extracted from the global ones. 
The sparsity of $\widetilde{\mathbf{L}}_h$ is controlled by the number of boundary faces in the chosen neighbourhood, while the accuracy of the approximation depends on the size of the neighbourhood. 
In particular, remark that choosing $\cells^j \eqbydef \cells$ for all~$j$ yields the actual matrix $\mathbf{L}_h$.
In practice, we construct $\cells^j$ by adding successive layers of neighbours around $T^j$. 
The number of layers is defined by the parameter $\alpha \in \mathbb{N}_0$. 
$\cells^j$ is defined as $\cells^j(\alpha)$, where $\cells^j(0) \eqbydef \{T^j\}$ and $\cells^j(\alpha) \eqbydef \cells^j(\alpha-1) \cup \{ T\in \cells \mid \exists T'\in \cells^j(\alpha-1) \text{ s.t. } \overline{T}\cap \overline{T'} \not= \emptyset \}$ for $\alpha \geq 1$. 
Note that this definition understands the neighbouring relationship as having at least one vertex in common. The neighbourhood represented in \Cref{fig:precond_mesh_nbh} corresponds to $\alpha=3$.

\subsection{Discussion on symmetry} \label{sec:discuss_symmetry}

Choosing a constant $\alpha$ for all $j$ allows to preserve a symmetric sparsity pattern.
Indeed, considering $(i, j) \in \{1, \dots, N_{\mathrm{B}} \}^2$, $(\widetilde{\mathbf{L}}_h)_{ij} \not= 0$ implies that $F^i\subset\partial\Omega^j$. Consequently, $(\widetilde{\mathbf{L}}_h)_{ji} \not= 0$ implies that, reciprocally, $F^j\subset\partial\Omega^i$.
However, symmetry itself is not preserved, inasmuch as $(\widetilde{\mathbf{L}}_h)_{ij}$ results from computations in the neighbourhood $\Omega^j$ whereas $(\widetilde{\mathbf{L}}_h)_{ji}$ results from computations in $\Omega^i \not= \Omega^j$.

To enforce the symmetry of the matrix $\mathbf{L}_h$, one must ensure that for all couple $(i, j) \in \{1, \dots, N_{\mathrm{B}} \}^2$, $\Omega^i = \Omega^j$. This requirement leads to a patch-based method: the set of boundary faces must be split into \emph{patches}, i.e.\ connected subsets of boundary faces. For each patch, one single neighbourhood must be defined to process all DoFs included in that patch. This yields a block-diagonal matrix $\widetilde{\mathbf{L}}_h$, one block corresponding to each patch. Algebraically, such a preconditioner is in fact an approximate block Jacobi preconditioner, insofar as each diagonal block approximates the corresponding diagonal block of the exact matrix $\mathbf{L}_h$.
This symmetric preconditioner was numerically tested. 
Even with the facts that a non-flexible Krylov method can be used and that the matrix $\widetilde{\mathbf{L}}_h$ is easier to factorize, the convergence of the method is significantly slower than with the unsymmetric preconditioner defined in \Cref{sec:comput_in_practice}.
Therefore, only the latter has been used in the numerical experiments of \Cref{sec:numerical_tests}.



\section{Numerical experiments} \label{sec:numerical_tests}

\subsection{Experimental setup}

The implicit system \eqref{eq:algebraic_pb} is solved iteratively by the PFCG method, using the preconditioning technique described in \Cref{sec:precond}.
Given the iterate $\widetilde{\lambdabf} \in \mathbb{R}^{N_{\mathrm{B}}}$, the corresponding residual vector is defined as $\mathbf{r} \eqbydef \mathbf{b} - \mathbf{L}_h(\widetilde{\lambdabf})$.
The PFCG algorithm stops when $||\mathbf{r}||_2/||\mathbf{b}||_2 < \varepsilon$, where $\varepsilon>0$ is a fixed tolerance and $||\cdot||_2$ denotes the Euclidean norm on $\mathbb{R}^{N_{\mathrm{B}}}$.
All experiments are conducted choosing $l = k$, but we stress that the same qualitative results are obtained with $l=k+1$.
In 2D, the linear systems corresponding to the Laplacian subproblems are solved by Cholesky factorization; in 3D, we use a $p$-multigrid algorithm on top of the algebraic multigrid method designed in \cite{di_pietro_algebraic_2021}.
In that latter case, the same tolerance $\varepsilon$ must be used to stop the Laplacian solvers and the global PFCG algorithm.
To apply the preconditioner, we solve $\widetilde{\mathbf{L}}_h$ using the BiCGSTAB algorithm with tolerance $\varepsilon$.
The computations are run on an 8-core processor (AMR M1 Pro) clocked at 3228 MHz.

\subsection{Preliminary: convergence of the discrete normal derivative} \label{sec:num_tests_normalder}

We first assert the validity of the discrete normal derivative proposed in \Cref{sec:normal_derivative} by evaluating its order of convergence.
Let $\Omega \eqbydef (0, 1)^2$ and $u\colon (x, y)\mapsto \sin(4\pi x)\sin(4\pi y)$ the manufactured solution of the boundary value problem~\eqref{eq:poisson_pb}, where $f$ and $g_\mathrm{D}$ are defined accordingly.
The problem is discretized on a sequence of successively refined meshes, and we consider $\flux(\underline{u}_h)$ computed by \eqref{eq:flux_computation_condensed}.
The exact normal derivative $\normalder{u}$ is known, and we assess the relative $L^2$-error $\|\normalder{u} - \flux(\underline{u}_h)\|_{L^2(\partial\Omega)} / \|\normalder{u}\|_{L^2(\partial\Omega)}$ for $k\in\{0, 1, 2, 3\}$.
\Cref{fig:normalder_convergence:cart,fig:normalder_convergence:delaunay} present the experimental results on uniform Cartesian meshes and on unstructured, triangular Delaunay meshes, respectively. 
These experiments assess a convergence in $\mathcal{O}(h^{k+1})$ (also observed for $l=k+1$), which is in line with the literature on finite elements \cite[Cor.\ 6.1]{melenk_quasi_2012}. Indeed, the convergence order of the normal derivative relies on that of the Laplacian scheme: if the discrete solution of the Laplacian scheme converges in $\mathcal{O}(h^p)$, with $p>1$, then the discrete normal derivative on the boundary converges in $\mathcal{O}(h^{p-1})$.
Transposed to HHO, since $u_h \eqbydef p_h^{k+1}\underline{u}_h$ converges in $\mathcal{O}(h^{k+2})$, it is then expected that $\flux(\underline{u}_h)$ converge in $\mathcal{O}(h^{k+1})$.
Note that in the case of a non-convex domain with re-entrant corners, this convergence order is expected to be reduced due to the corner singularities \cite{pfefferer_normalder_2019}. One way to mitigate this issue is the use of graded meshes (cf. \cite{pfefferer_normalder_2019}).

\begin{figure}
  \begin{center}
    \begin{subfigure}[t]{0.48\textwidth}
     \centering
     \begin{tikzpicture}[scale=1]
      \begin{loglogaxis}[
          width=\convergencePlotWidth,
          height=\convergencePlotHeight,
          ymajorgrids,
          xlabel=$h$,
          ylabel={$L^2$-error},
          ymin=1e-11, ymax=1e-0,
          cycle list name=convergence_k,
          legend pos=outer north east,
          legend cell align={left},
        ]
        \logLogSlopeTriangle{0.25}{0.16}{0.76}{1}{black};
        \addplot coordinates {
            (8.838835e-02, 2.24e-01) 
            (4.419417e-02, 1.13e-01) 
            (2.209709e-02, 5.66e-02) 
            (1.104854e-02, 2.83e-02) 
            (5.524272e-03, 1.42e-02) 
            (2.762136e-03, 7.09e-03) 
        };
        \logLogSlopeTriangle{0.25}{0.16}{0.54}{2}{blue};
        \addplot coordinates {
            (8.838835e-02, 2.96e-02) 
            (4.419417e-02, 6.44e-03) 
            (2.209709e-02, 1.49e-03) 
            (1.104854e-02, 3.63e-04) 
            (5.524272e-03, 9.00e-05) 
            (2.762136e-03, 2.25e-05) 
        };
        \logLogSlopeTriangle{0.25}{0.16}{0.30}{3}{red};
        \addplot coordinates {
            (8.838835e-02, 1.85e-03) 
            (4.419417e-02, 2.03e-04) 
            (2.209709e-02, 2.42e-05) 
            (1.104854e-02, 2.99e-06) 
            (5.524272e-03, 3.73e-07) 
            (2.762136e-03, 4.66e-08) 
        };
        \logLogSlopeTriangle{0.25}{0.16}{0.10}{4}{brown!60!black};
        \addplot coordinates {
            (8.838835e-02, 4.36e-04) 
            (4.419417e-02, 2.63e-05) 
            (2.209709e-02, 1.63e-06) 
            (1.104854e-02, 1.01e-07) 
            (5.524272e-03, 6.34e-09) 
            (2.762136e-03, 3.96e-10) 
        };
        \legend{$k=0$, $k=1$, $k=2$, $k=3$};
      \end{loglogaxis}
      \end{tikzpicture}
        \caption{Uniform, Cartesian meshes.}
        \label{fig:normalder_convergence:cart}
    \end{subfigure}
    \begin{subfigure}[t]{0.48\textwidth}
     \centering
     \begin{tikzpicture}[scale=1]
      \begin{loglogaxis}[
          width=\convergencePlotWidth,
          height=\convergencePlotHeight,
          ymajorgrids,
          xlabel=$h$,
          ymin=1e-11, ymax=1e-0,
          cycle list name=convergence_k,
        ]
        \logLogSlopeTriangle{0.25}{0.16}{0.76}{1}{black};
        \addplot coordinates {
            (7.513969e-02, 2.12e-01) 
            (3.661076e-02, 1.10e-01) 
            (2.084383e-02, 5.58e-02) 
            (1.053829e-02, 2.81e-02) 
            (5.223640e-03, 1.41e-02) 
            (2.629384e-03, 7.08e-03) 
        };
        \logLogSlopeTriangle{0.25}{0.16}{0.54}{2}{blue};
        \addplot coordinates {
            (7.513969e-02, 2.16e-02) 
            (3.661076e-02, 5.50e-03) 
            (2.084383e-02, 1.46e-03) 
            (1.053829e-02, 3.67e-04) 
            (5.223640e-03, 9.25e-05) 
            (2.629384e-03, 2.29e-05) 
        };
        \logLogSlopeTriangle{0.25}{0.16}{0.30}{3}{red};
        \addplot coordinates {
            (7.513969e-02, 1.59e-03) 
            (3.661076e-02, 2.11e-04) 
            (2.084383e-02, 2.72e-05) 
            (1.053829e-02, 3.55e-06) 
            (5.223640e-03, 4.54e-07) 
            (2.629384e-03, 6.49e-08) 
        };
        \logLogSlopeTriangle{0.25}{0.16}{0.10}{4}{brown!60!black};
        \addplot coordinates {
            (7.513969e-02, 2.14e-04) 
            (3.661076e-02, 1.69e-05) 
            (2.084383e-02, 1.02e-06) 
            (1.053829e-02, 6.54e-08) 
            (5.223640e-03, 4.15e-09) 
            (2.629384e-03, 3.07e-10) 
        };
      \end{loglogaxis}
      \end{tikzpicture}
        \caption{Delaunay meshes.}
        \label{fig:normalder_convergence:delaunay}
    \end{subfigure}
    \caption{Convergence of the discrete normal derivative $\flux(\underline{u}_h)$ for a smooth solution.}
    \label{fig:normalder_convergence}
  \end{center}
\end{figure}
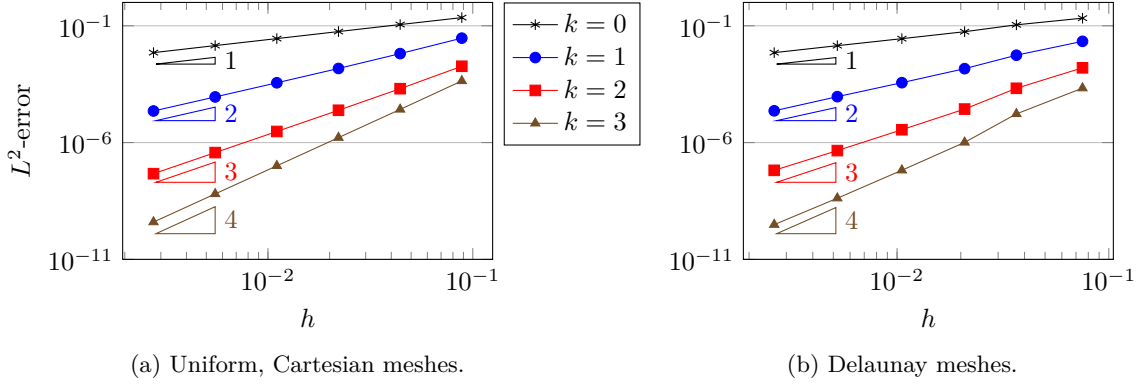

\subsection{Convergence rate of the scheme}

On the unit square, we consider $\psi(x, y) = x\sin(\pi y)e^{-xy}$ and its corresponding $\omega=-\Delta \psi$, the manufactured solution of \eqref{eq:mixed}, with $\mathsf{f}, \mathsf{g}_\mathrm{D}$ and $\mathsf{g}_\mathrm{N}$ determined accordingly.
The square is decomposed into a sequence of successively finer meshes, and the numerical scheme is applied for $k\in\{0, 1, 2, 3\}$. 
\Cref{fig:scheme_convergence} presents the evolution of the relative $L^2$-error achieved by the approximation $(\omega_h, \psi_h)$ with respect to $(\omega, \psi)$, on a sequence of Cartesian meshes.
Similarly, \Cref{fig:scheme_convergence:polymesh} shows the analogous results obtained with a sequence of polygonal meshes, each constructed from an initial Cartesian mesh by agglomeration and face collapsing.
One such mesh is represented in \Cref{fig:poly_mesh0}.
We empirically interpret these results in light of Glowinski \emph{et al.}'s theoretical findings in \cite{glowinski_numerical_1979} for the continuous high-order FEM.
According to \cite[Section~3.3]{glowinski_numerical_1979}, since $\psi$ is regular enough, the FEM solution $(\omega_h^\textrm{FEM}, \psi_h^\textrm{FEM})$ of order $p \geq 3$ verifies
\begin{equation} \label{eq:fem_error_estimate}
    \| \psi_h^\textrm{FEM} - \psi \|_{L^2(\Omega)} + h^2\| \omega_h^\textrm{FEM} - \omega \|_{L^2(\Omega)} \leq C h^{p+1} \|\psi\|_{H^{p+1}(\Omega)},
\end{equation}
where $C$ is a constant independent of $h$ and $\psi$.
In our HHO setting, the above error estimate would translate in the following one: if $k \geq 2$, then
\begin{equation} \label{eq:fem2hho_error_estimate}
    \| \psi_h - \psi \|_{L^2(\Omega)} + h^2\| \omega_h - \omega \|_{L^2(\Omega)} \leq C h^{k+2} \|\psi\|_{H^{k+2}(\Omega)}.
\end{equation}
The experimental results of \Cref{fig:scheme_convergence} validate the estimate \eqref{eq:fem2hho_error_estimate}, not only for $k \geq 2$, but also for $k\in \{0, 1\}$.
More precisely, while \Cref{fig:scheme_convergence:psi} shows that the estimate is sharp for $\psi$ (i.e.\ $\mathcal{O}(h^{k+2})$), \Cref{fig:scheme_convergence:omega} shows, for $\omega$, a faster convergence than indicated by \eqref{eq:fem2hho_error_estimate}.
Namely, the convergence seems to be $\mathcal{O}(h^{k+1})$ for $k=0$ and $\mathcal{O}(h^{k+\nicefrac12})$ for $k\geq 1$.
The experiments of \Cref{fig:scheme_convergence:polysol} on polygonal meshes exhibit the same convergence orders, except for $\omega_h$ with $k=0$, where the error estimate reduces to $\mathcal{O}(h^{k+\nicefrac12})$, in accordance with the other values of $k$.
Based on these experiments, we conjecture that the following error estimate is sharp for all $k\geq 0$:
\begin{equation} \label{eq:hho_error_estimate}
    \| \psi_h - \psi \|_{L^2(\Omega)} + h^{\nicefrac32}\| \omega_h - \omega \|_{L^2(\Omega)} \leq C h^{k+2} \|\psi\|_{H^{k+2}(\Omega)}.
\end{equation}
Nonetheless, we keep in mind that $k=0$ seems to be a special case for $\omega_h$, where superconvergence may be observed: besides the order 1 obtained on Cartesian meshes (\Cref{fig:scheme_convergence:omega}), we observe the order 2 if we choose as an exact solution the polynomial function $\psi(x, y) = x^4(x - 1)^2 y^4(y - 1)^2$, as illustrated in \Cref{fig:scheme_convergence:polysol}.

\begin{remark}
    (Comparison with \cite{dong_hybrid_2022})
    The HHO discretizations of \cite{dong_hybrid_2022} rely on the primal formulation \eqref{eq:biharmonic} of the equation.
    They hinge on the local approximation space $\mathbb{P}^{k+2}(T)\times\brokenPolyF{T}{k+1}\times\brokenPolyF{T}{k}$ in 2D (resp.\ $\mathbb{P}^{k+2}(T)\times\brokenPolyF{T}{k+2}\times\brokenPolyF{T}{k}$ in 3D), where the associated set of DoFs aims at approximating $(\psi_{|T}, \psi_{|\partial T}, \nabla\psi\cdot \mathbf{n}_{\partial T})$. 
    With this setting, the methods achieve a convergence order of $k+3$ in $L^2$-norm.
    In comparison, the present method requires the space $\mathbb{P}^{k+1}(T)\times\brokenPolyF{T}{k+1}$ to obtain the same convergence order, i.e.\ one less polynomial degree for the cell unknowns and no DoF dedicated to the approximation of $\nabla\psi\cdot \mathbf{n}_{\partial T}$.
    However, if our method is structurally lighter in DoFs than \cite{dong_hybrid_2022}, one cannot conclude regarding the computational cost and overall efficiency. 
    Attempting a fair comparison necessitates to account for the fact that, in spite of the lower number of unknowns and the existence of fast solvers for second-order elliptic systems, two such systems have to be solved at each iteration of the iterative process. In comparison, the monolithic system arising from \cite{dong_hybrid_2022}, although larger and exhibiting a more challenging structure, has to be solved only once. If an iterative solver is used, notwithstanding the preconditioner, no inner system has to be solved at each iteration, which suggests a lower cost per iteration than the present method.
    However, for large-scale problems, scalability becomes essential: on top of the cost per iteration, convergence speed and massive parallelization become crucial criteria. In this context, the efficiency of a method strongly depends on the availability of an efficient preconditioner.
\end{remark}

\begin{figure}
  \begin{center}
    \begin{subfigure}[t]{0.48\textwidth}
  	 \centering
     \begin{tikzpicture}[scale=1]
      \begin{loglogaxis}[
          width=\convergencePlotWidth,
          height=\convergencePlotHeight,
          ymajorgrids,
          xlabel=$h$,
          ylabel={$L^2$-error},
          ytick={1e-0, 1e-3, 1e-6, 1e-9},
          cycle list name=convergence_k,
          legend pos=outer north east,
          legend cell align={left},
        ]
        \logLogSlopeTriangle{0.28}{0.18}{0.74}{1.0}{black};
        \addplot coordinates {
            (8.838835e-02, 6.38e-02) 
            (4.419417e-02, 3.00e-02) 
            (2.209709e-02, 1.45e-02) 
            (1.104854e-02, 7.15e-03) 
            (5.524272e-03, 3.55e-03) 
        };
        \logLogSlopeTriangle{0.28}{0.18}{0.49}{1.5}{blue};
        \addplot coordinates {
            (8.838835e-02, 1.37e-03) 
            (4.419417e-02, 3.95e-04) 
            (2.209709e-02, 1.24e-04) 
            (1.104854e-02, 4.10e-05) 
            (5.524272e-03, 1.40e-05) 
        };
        \logLogSlopeTriangle{0.28}{0.18}{0.25}{2.5}{red};
        \addplot coordinates {
            (8.838835e-02, 9.79e-05) 
            (4.419417e-02, 1.48e-05) 
            (2.209709e-02, 2.38e-06) 
            (1.104854e-02, 3.98e-07) 
            (5.524272e-03, 6.83e-08) 
        };
        \logLogSlopeTriangle{0.28}{0.18}{0.05}{3.5}{brown!60!black};
        \addplot coordinates {
            (8.838835e-02, 4.72e-06) 
            (4.419417e-02, 5.29e-07) 
            (2.209709e-02, 6.16e-08) 
            (1.104854e-02, 7.39e-09) 
            (5.524272e-03, 9.03e-10) 
        };
        \legend{$k=0$, $k=1$, $k=2$, $k=3$};
      \end{loglogaxis}
      \end{tikzpicture}
        \caption{$\omega$}
      \label{fig:scheme_convergence:omega}
    \end{subfigure}
    \hfill
  \begin{subfigure}[t]{0.48\textwidth}
   \centering
     \begin{tikzpicture}[scale=1]
      \begin{loglogaxis}[
          width=\convergencePlotWidth,
          height=\convergencePlotHeight,
          ymajorgrids,
          xlabel=$h$,
          ytick={1e-0, 1e-3, 1e-6, 1e-9, 1e-12},
          cycle list name=convergence_k,
          legend pos=outer north east,
          legend cell align={left},
        ]
        \logLogSlopeTriangle{0.28}{0.18}{0.68}{2}{black};
        \addplot coordinates {
            (8.838835e-02, 1.10e-02) 
            (4.419417e-02, 2.72e-03) 
            (2.209709e-02, 6.78e-04) 
            (1.104854e-02, 1.69e-04) 
            (5.524272e-03, 4.23e-05) 
        };
        \logLogSlopeTriangle{0.28}{0.18}{0.38}{3}{blue};
        \addplot coordinates {
            (8.838835e-02, 6.25e-05) 
            (4.419417e-02, 6.23e-06) 
            (2.209709e-02, 6.85e-07) 
            (1.104854e-02, 8.10e-08) 
            (5.524272e-03, 9.88e-09) 
        };
        \logLogSlopeTriangle{0.28}{0.18}{0.15}{4}{red};
        \addplot coordinates {
            (8.838835e-02, 8.74e-07) 
            (4.419417e-02, 5.16e-08) 
            (2.209709e-02, 3.17e-09) 
            (1.104854e-02, 1.97e-10) 
            (5.524272e-03, 1.23e-11) 
        };
        \logLogSlopeTriangle{0.50}{0.18}{0.07}{5}{brown!60!black};
        \addplot coordinates {
            (8.838835e-02, 3.25e-08) 
            (4.419417e-02, 1.04e-09) 
            (2.209709e-02, 3.27e-11) 
            (1.104854e-02, 1.12e-12) 
            (5.524272e-03, 1.08e-12) 
        };
      \end{loglogaxis}
      \end{tikzpicture}
      \caption{$\psi$}
      \label{fig:scheme_convergence:psi}
    \end{subfigure}
    \caption{Convergence in $L^2$-norm of the discrete solution $(\omega_h, \psi_h)$ with respect to the exact solution $\psi(x, y) = x\sin(\pi y)e^{-xy}$. Square domain discretized by Cartesian meshes.}
    \label{fig:scheme_convergence}
  \end{center}
\end{figure}
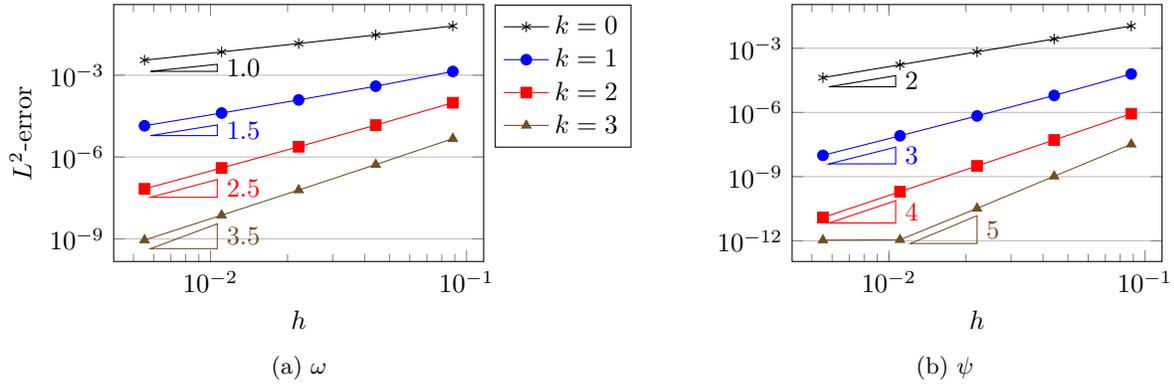

\begin{figure}
  \begin{center}
    \begin{subfigure}[t]{0.48\textwidth}
  	 \centering
     \begin{tikzpicture}[scale=1]
      \begin{loglogaxis}[
          width=\convergencePlotWidth,
          height=\convergencePlotHeight,
          ymajorgrids,
          xlabel=$h$,
          ylabel={$L^2$-error},
          ytick={1e-0, 1e-3, 1e-6, 1e-9},
          cycle list name=convergence_k,
          legend pos=outer north east,
          legend cell align={left},
        ]
        \logLogSlopeTriangle{0.28}{0.18}{0.80}{0.5}{black};
        \addplot coordinates {
            (1.33e-01    , 4.85e-02) 
            (6.63e-02    , 2.92e-02) 
            (3.31e-02    , 2.11e-02) 
            (1.66e-02    , 1.35e-02) 
            (8.29e-03    , 9.17e-03) 
        };
        \logLogSlopeTriangle{0.30}{0.18}{0.53}{1.5}{blue};
        \addplot coordinates {
            (1.33e-01    , 2.28e-03) 
            (6.63e-02    , 8.73e-04) 
            (3.31e-02    , 3.11e-04) 
            (1.75e-02    , 1.16e-04) 
            (8.73e-03    , 3.75e-05) 
        };
        \logLogSlopeTriangle{0.30}{0.18}{0.25}{2.5}{red};
        \addplot coordinates {
            (1.33e-01    , 1.45e-04) 
            (6.63e-02    , 2.10e-05) 
            (3.31e-02    , 3.75e-06) 
            (1.75e-02    , 6.79e-07) 
            (8.29e-03    , 1.16e-07) 
        };
        \logLogSlopeTriangle{0.28}{0.18}{0.05}{3.5}{brown!60!black};
        \addplot coordinates {
            (1.33e-01    , 7.64e-06) 
            (6.63e-02    , 1.68e-06) 
            (3.31e-02    , 2.19e-07) 
            (1.66e-02    , 2.57e-08) 
            (8.29e-03    , 2.84e-09) 
        };
        \legend{$k=0$, $k=1$, $k=2$, $k=3$};
      \end{loglogaxis}
      \end{tikzpicture}
        \caption{$\omega$}
      \label{fig:scheme_convergence:polymesh:omega}
    \end{subfigure}
    \hfill
  \begin{subfigure}[t]{0.48\textwidth}
   \centering
     \begin{tikzpicture}[scale=1]
      \begin{loglogaxis}[
          width=\convergencePlotWidth,
          height=\convergencePlotHeight,
          ymajorgrids,
          xlabel=$h$,
          ytick={1e-0, 1e-3, 1e-6, 1e-9, 1e-12},
          cycle list name=convergence_k,
          legend pos=outer north east,
          legend cell align={left},
        ]
        \logLogSlopeTriangle{0.29}{0.18}{0.68}{2}{black};
        \addplot coordinates {
            (1.33e-01    , 1.01e-02) 
            (6.63e-02    , 2.43e-03) 
            (3.31e-02    , 6.72e-04) 
            (1.66e-02    , 1.85e-04) 
            (8.29e-03    , 4.41e-05) 
        };
        \logLogSlopeTriangle{0.30}{0.18}{0.40}{3}{blue};
        \addplot coordinates {
            (1.33e-01    , 1.12e-04) 
            (6.63e-02    , 1.34e-05) 
            (3.31e-02    , 1.69e-06) 
            (1.75e-02    , 2.09e-07) 
            (8.73e-03    , 2.61e-08) 
        };
        \logLogSlopeTriangle{0.29}{0.18}{0.17}{4}{red};
        \addplot coordinates {
            (1.33e-01    , 3.60e-06) 
            (6.63e-02    , 1.88e-07) 
            (3.31e-02    , 1.17e-08) 
            (1.75e-02    , 7.45e-10) 
            (8.29e-03    , 4.63e-11) 
        };
        \logLogSlopeTriangle{0.50}{0.18}{0.06}{5}{brown!60!black};
        \addplot coordinates {
            (1.33e-01    , 6.72e-08) 
            (6.63e-02    , 2.33e-09) 
            (3.31e-02    , 7.55e-11) 
            (1.66e-02    , 2.42e-12) 
            (8.29e-03    , 2.42e-12) 
        };
      \end{loglogaxis}
      \end{tikzpicture}
      \caption{$\psi$}
      \label{fig:scheme_convergence:polymesh:psi}
    \end{subfigure}
    \caption{Convergence in $L^2$-norm of the discrete solution $(\omega_h, \psi_h)$ with respect to the exact solution $\psi(x, y) = x\sin(\pi y)e^{-xy}$. Square domain discretized by polygonal meshes.}
    \label{fig:scheme_convergence:polymesh}
  \end{center}
\end{figure}

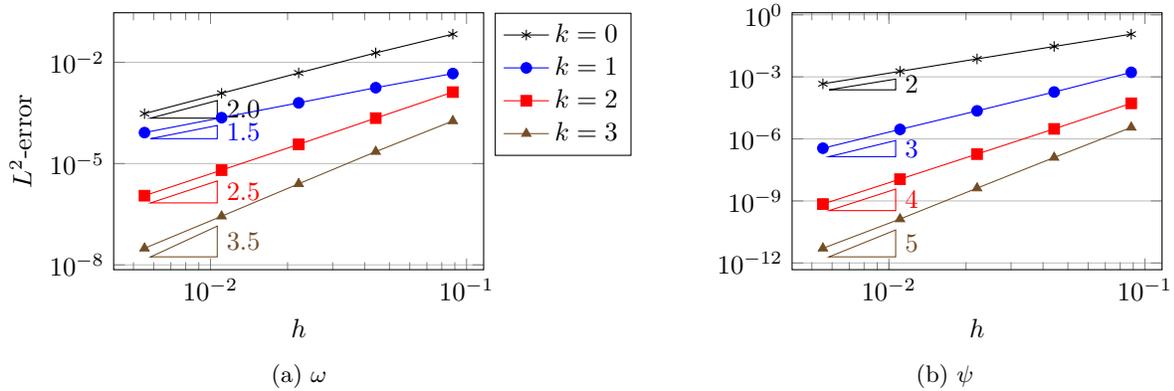
\begin{figure}
  \begin{center}
    \begin{subfigure}[t]{0.48\textwidth}
  	 \centering
     \begin{tikzpicture}[scale=1]
      \begin{loglogaxis}[
          width=\convergencePlotWidth,
          height=\convergencePlotHeight,
          ymajorgrids,
          xlabel=$h$,
          ylabel={$L^2$-error},
          ytick={1e-0, 1e-2, 1e-5, 1e-8},
          cycle list name=convergence_k,
          legend pos=outer north east,
          legend cell align={left},
        ]
        \logLogSlopeTriangle{0.28}{0.18}{0.59}{2.0}{black};
        \addplot coordinates {
            (8.838835e-02, 6.81e-02) 
            (4.419417e-02, 1.90e-02) 
            (2.209709e-02, 4.84e-03) 
            (1.104854e-02, 1.21e-03) 
            (5.524272e-03, 3.02e-04) 
        };
        \logLogSlopeTriangle{0.28}{0.18}{0.51}{1.5}{blue};
        \addplot coordinates {
            (8.838835e-02, 4.64e-03) 
            (4.419417e-02, 1.77e-03) 
            (2.209709e-02, 6.30e-04) 
            (1.104854e-02, 2.29e-04) 
            (5.524272e-03, 8.28e-05) 
        };
        \logLogSlopeTriangle{0.28}{0.18}{0.26}{2.5}{red};
        \addplot coordinates {
            (8.838835e-02, 1.31e-03) 
            (4.419417e-02, 2.22e-04) 
            (2.209709e-02, 3.71e-05) 
            (1.104854e-02, 6.40e-06) 
            (5.524272e-03, 1.12e-06) 
        };
        \logLogSlopeTriangle{0.28}{0.18}{0.05}{3.5}{brown!60!black};
        \addplot coordinates {
            (8.838835e-02, 1.83e-04) 
            (4.419417e-02, 2.26e-05) 
            (2.209709e-02, 2.52e-06) 
            (1.104854e-02, 2.76e-07) 
            (5.524272e-03, 3.10e-08) 
        };
        \legend{$k=0$, $k=1$, $k=2$, $k=3$};
      \end{loglogaxis}
      \end{tikzpicture}
        \caption{$\omega$}
      \label{fig:scheme_convergence:polysol:omega}
    \end{subfigure}
    \hfill
  \begin{subfigure}[t]{0.48\textwidth}
   \centering
     \begin{tikzpicture}[scale=1]
      \begin{loglogaxis}[
          width=\convergencePlotWidth,
          height=\convergencePlotHeight,
          ymajorgrids,
          xlabel=$h$,
          ytick={1e-0, 1e-3, 1e-6, 1e-9, 1e-12},
          cycle list name=convergence_k,
          legend pos=outer north east,
          legend cell align={left},
        ]
        \logLogSlopeTriangle{0.28}{0.18}{0.7}{2}{black};
        \addplot coordinates {
            (8.838835e-02, 1.16e-01) 
            (4.419417e-02, 2.95e-02) 
            (2.209709e-02, 7.40e-03) 
            (1.104854e-02, 1.85e-03) 
            (5.524272e-03, 4.63e-04) 
        };
        \logLogSlopeTriangle{0.28}{0.18}{0.44}{3}{blue};
        \addplot coordinates {
            (8.838835e-02, 1.65e-03) 
            (4.419417e-02, 1.85e-04) 
            (2.209709e-02, 2.27e-05) 
            (1.104854e-02, 2.88e-06) 
            (5.524272e-03, 3.54e-07) 
        };
        \logLogSlopeTriangle{0.28}{0.18}{0.23}{4}{red};
        \addplot coordinates {
            (8.838835e-02, 5.25e-05) 
            (4.419417e-02, 3.05e-06) 
            (2.209709e-02, 1.84e-07) 
            (1.104854e-02, 1.13e-08) 
            (5.524272e-03, 7.05e-10) 
        };
        \logLogSlopeTriangle{0.28}{0.18}{0.05}{5}{brown!60!black};
        \addplot coordinates {
            (8.838835e-02, 3.60e-06) 
            (4.419417e-02, 1.26e-07) 
            (2.209709e-02, 4.15e-09) 
            (1.104854e-02, 1.33e-10) 
            (5.524272e-03, 5.13e-12) 
        };
      \end{loglogaxis}
      \end{tikzpicture}
      \caption{$\psi$}
      \label{fig:scheme_convergence:polysol:psi}
    \end{subfigure}
    \caption{Convergence in $L^2$-norm of the discrete solution $(\omega_h, \psi_h)$ with respect to the exact solution $\psi(x, y) = x^4(x - 1)^2 y^4(y - 1)^2$. Square domain discretized by Cartesian meshes.}
    \label{fig:scheme_convergence:polysol}
  \end{center}
\end{figure}

\begin{figure}
    \centering
    \includegraphics[scale=0.25, trim = 40mm 90mm 40mm 85mm, clip]{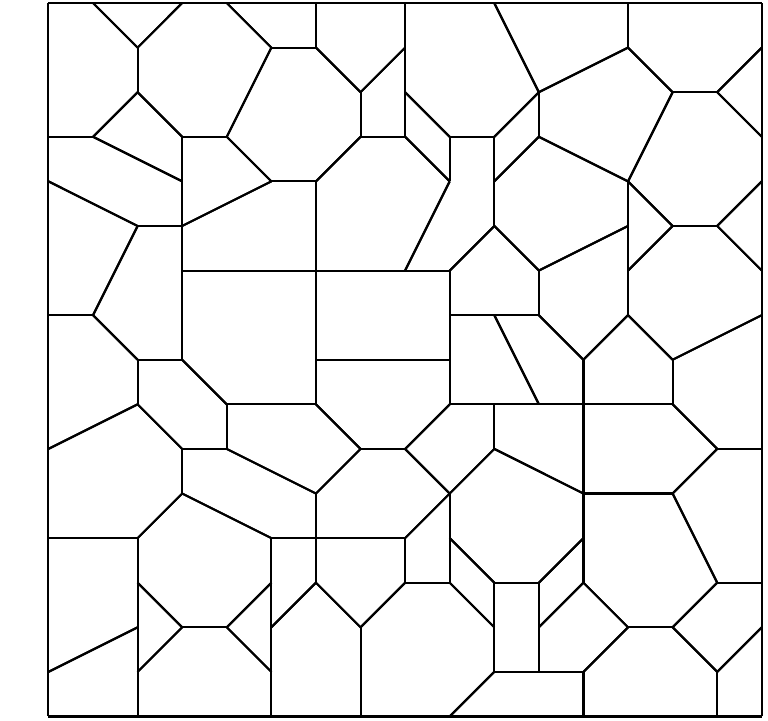}
    \caption{Example of a polygonal mesh of the unit square $\Omega = (0, 1)^2$.}
    \label{fig:poly_mesh0}
\end{figure}

\subsection{Preconditioner} \label{sec:num_exp_precond}

We evaluate in this section the efficiency of the preconditioner designed in \Cref{sec:precond}. We recall that the parameter $\alpha \in \mathbb{N}_0$ is an indicator of the size of the neighbourhoods used for the approximation of the matrix. 

\subsubsection{Convergence vs.\ cost: the parameter \texorpdfstring{$\alpha$}{alpha}}

The considered test case is the square domain partitioned into $256\times 256$ Cartesian elements, with $k=1$. The tolerance is set to $\varepsilon=10^{-14}$.
Let us assess how the preconditioner improves the speed of convergence. \Cref{fig:precond_res_decay} plots the decay of the algebraic residual $||\mathbf{r}||_2/||\mathbf{b}||_2$ without preconditioner (in red) and with preconditioner (in blue), considering increasing values of $\alpha$. 
One can confirm that, as expected, the higher the value of $\alpha$, the better the convergence rate. However, increasing $\alpha$ increases the setup cost of the preconditioner. 
Indeed, (i) the Laplacian problems are solved in larger subdomains, (ii) the approximate matrix $\widetilde{\mathbf{L}}_h$ is denser, which increases the cost of its factorization. 
\Cref{fig:precond_cost} presents the computational cost, measured in CPU time, to reach the algebraic solution. 
One can see that a trade-off between cost and convergence rate must be made to achieve optimal performance. 
If raising $\alpha$ is a good strategy up to $\alpha=6$, above that threshold, the better convergence rate does not make up for the cost of the setup phase, hence making the overall CPU time slightly increase.
We stress that other choices of $k$ yield the same qualitative results.

\begin{figure}
  \begin{center}
  %
  %
  	\begin{subfigure}[b]{0.60\textwidth} 
    \begin{tikzpicture}[scale=0.9]
      \begin{semilogyaxis}[
          width=6.8cm,
		  height=5cm,
          ymajorgrids,
          xlabel={Iterations},
          ylabel={residual},
          xmin=0, xmax=100,
          ymin=1e-14, ymax=1e1,
          ytick={1e-2,1e-6,1e-10,1e-14},
          cycle list name=preconditioner_plot,
          legend cell align=left,
          legend pos=outer north east,
        ]
        \addplot table[x=Iter,y=No,col sep=comma] {data/prec_square_cart_k1_n256.csv};
        \addplot table[x=Iter,y=P2,col sep=comma] {data/prec_square_cart_k1_n256.csv};
        \addplot table[x=Iter,y=P4,col sep=comma] {data/prec_square_cart_k1_n256.csv};
        \addplot table[x=Iter,y=P6,col sep=comma] {data/prec_square_cart_k1_n256.csv};
        \addplot table[x=Iter,y=P8,col sep=comma] {data/prec_square_cart_k1_n256.csv};
        \addplot table[x=Iter,y=P10,col sep=comma] {data/prec_square_cart_k1_n256.csv};
        \legend{unprec. FCG, PFCG($\alpha = 2$), PFCG($\alpha = 4$), PFCG($\alpha = 6$), PFCG($\alpha = 8$), PFCG($\alpha = 10$)};
      \end{semilogyaxis}
      \end{tikzpicture}
      \caption{Decay of the algebraic residual}
        \label{fig:precond_res_decay}
    \end{subfigure}
    \hfill
    \begin{subfigure}[b]{0.39\textwidth} 
        \centering
        \begin{tikzpicture}[scale=1]
            \begin{axis}[
                ybar stacked,
                xlabel=$\alpha$,
                ylabel=CPU time (s),
                bar shift=0pt,
                width=5cm,
                xtick={2,4,6,8,10},
                xmin=0,
                xmax=12,
                ymin=0,
                reverse legend,
                legend cell align={left},
                legend pos=outer north east
            ]
            \addplot coordinates {(2,   0.640) (4,   2.265) (6,   6.468) (8, 15.937) (10, 27.781)}; 
            \addplot coordinates {(2, 151.765) (4, 123.718) (6, 100.765) (8, 93.218) (10, 82.515)}; 
            \legend{setup, iterations};
            \end{axis}
        \end{tikzpicture}
        \caption{CPU time vs. $\alpha$}
        \label{fig:precond_cost}
    \end{subfigure}
    \caption{(a) compares the speed of convergence of the (P)FCG w.r.t. the normalized algebraic residual $||\mathbf{r}||_2/||\mathbf{b}||_2$. (b) compares the CPU time consumed to reach a tolerance of $10^{-14}$, in function of $\alpha$. The test case is the square domain partitioned into $256\times 256$ Cartesian elements, $k=1$.}
    \label{fig:precond_perf}
  \end{center}
\end{figure}
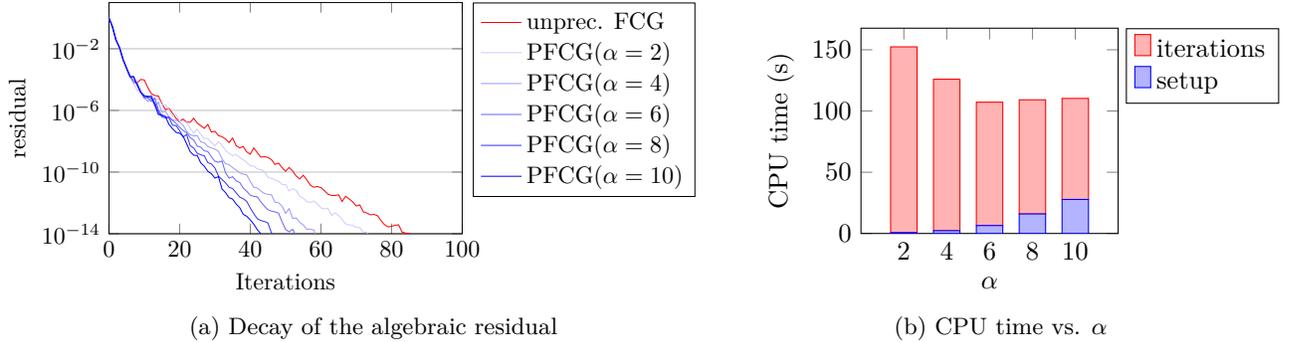

\subsubsection{PFCG convergence rate: \texorpdfstring{$k$}{k}-independence and \texorpdfstring{$h$}{h}-dependency assessment}

Problem \eqref{eq:mixed} on the unit square with exact solution $\psi(x, y) = x\sin(\pi y)e^{-xy}$ is solved on a sequence of 2D grids composed of $N^2$ Cartesian elements, $N\in\{32, 64, 128, 256, 512\}$.
\Cref{tbl:iterations_square} presents, for $k\in\{0,1,2,3\}$, the number of PFCG iterations executed to reach the convergence criterion with $\varepsilon=10^{-8}$. The parameter of the preconditioner is set to $\alpha=8$. For comparison, the number of iterations without preconditioning is reported in brackets.
Firstly, one can remark that the use of the preconditioner allows a convergence rate independent of $k$, while a dependency is observed without preconditioner.
Secondly, we approximate the asymptotic convergence rate with respect to the problem size by using the number of iterations measured on the two finer meshes. 
This yields a dependency in $\mathcal{O}(h^{-0.41})$. 
Additionally, \Cref{tbl:times_square} reports the CPU times of the setup and iteration phases.
The setup includes the Cholesky factorization of the Laplacian matrix and the assembly of the preconditioning matrix. 
In brackets, the same information without preconditioning is displayed.
Based on these results, one can see that the compensation of setup cost by the gain in iteration time is more and more efficient as the problem grows larger.

The equivalent experiment is performed in 3D (exact solution $\psi(x, y) = xz\sin(\pi y)e^{-xy}$) with unstructured tetrahedral meshes.
As the preconditioner is costlier in 3D, we choose $\alpha = 2$.
The number of PFCG iterations (independent of $k$) is presented in \Cref{tbl:iterations_cube:tetra}, which exhibits a dependency in $\mathcal{O}(h^{-0.45})$. 
The CPU times of setup and iteration phases are reported for $k=0$.
In brackets, the same information without preconditioning is displayed.


\begin{table}
    \centering
	\begin{tabular}{r|ccccc}
    	\toprule
    	   \multicolumn{1}{r}{$N=$}  & 32 & 64 & 128 & 256 & 512 \\
    	\midrule
    	   \multicolumn{1}{r}{$k=0$} & 13 (19) & 19 (25) & 26 (33) & 33 (42) & 44 (56) \\
    	   \multicolumn{1}{r}{$k=1$} & 13 (30) & 19 (36) & 26 (47) & 33 (58) & 44 (75) \\
    	   \multicolumn{1}{r}{$k=2$} & 13 (37) & 19 (47) & 26 (62) & 33 (77) & 44 (101) \\
    	   \multicolumn{1}{r}{$k=3$} & 13 (44) & 19 (58) & 26 (75) & 33 (96) & 44 (122) \\
    	\bottomrule
    \end{tabular}
    \caption{Test case: $N^2$ Cartesian elements, $\varepsilon=10^{-8}$. Number of PFCG($\alpha=8$) iterations. In brackets, number of unpreconditioned FCG iterations.}
    \label{tbl:iterations_square}
\end{table}

\begin{table}
    \centering
	\begin{tabular}{rl|rr|rr|rr|rr|rr}
    	\toprule
    	   \multicolumn{2}{r}{$N=$}& \multicolumn{2}{c}{32} & \multicolumn{2}{c}{64} & \multicolumn{2}{c}{128} & \multicolumn{2}{c}{256} & \multicolumn{2}{c}{512} \\
    	\midrule
              \multirow{2}{*}{$k=0$}& setup &  0.4 & (0.0) &  0.9 &  (0.0) &  1.9 &   (0.0) &   3.9 &    (0.0) &   10.7 &    (2.0)\\
                                    & iter. &  0.1 & (0.1) &  0.3 &  (0.4) &  1.8 &   (2.3) &   9.4 &   (11.8) &   50.8 &   (66.7)\\ \hline 
              \multirow{2}{*}{$k=1$}& setup &  1.2 & (0.0) &  2.8 &  (0.0) &  5.7 &   (0.2) &  13.9 &    (1.7) &   41.0 &   (17.2)\\
                                    & iter. &  0.2 & (0.5) &  1.1 &  (2.2) &  6.3 &  (11.5) &  34.1 &   (60.1) &  194.9 &  (336.3)\\ \hline       
              \multirow{2}{*}{$k=2$}& setup &  4.8 & (0.0) & 10.1 &  (0.0) & 22.0 &   (0.6) &  51.0 &    (5.6) &  147.8 &   (57.0)\\
                                    & iter. &  0.6 & (1.7) &  3.5 &  (8.7) & 20.1 &  (48.6) & 109.1 &  (251.9) &  646.2 & (1450.4)\\ \hline       
              \multirow{2}{*}{$k=3$}& setup & 18.2 & (0.0) & 40.5 &  (0.1) & 86.8 &   (1.2) & 188.4 &   (13.4) &  508.0 &  (155.7)\\
                                    & iter. &  1.8 & (6.3) & 10.8 & (33.2) & 61.4 & (177.3) & 319.4 & (1028.9) & 2028.4 & (6271.7)\\
    	\bottomrule
    \end{tabular}
    \caption{Test case: $N^2$ Cartesian elements, $\varepsilon=10^{-8}$. CPU times (in seconds) of the setup and iteration phases. In brackets: analogous quantity without preconditioning.}
    \label{tbl:times_square}
\end{table}


\begin{table}[h]
    \centering
	\begin{tabular}{r|ccccc}
    	\toprule
    	   \multicolumn{1}{r}{$h\approx$}  & $h_0$ & $h_0/2$ & $h_0/4$ & $h_0/8$ \\
    	   \multicolumn{1}{r}{Elements}  & \num{3373} & \num{22869} & \num{162167} & \num{1224468} \\
    	\midrule
              \multicolumn{1}{r}{iter. counts} & 14  (29)  & 20   (40)  & 27   (50)   &  37   (68)    \\
    	   \multicolumn{1}{r}{setup time}   & 5.0 (0.1) & 21.8 (1.1) & 82.8  (8.6) & 581.3 (64.2)  \\
    	   \multicolumn{1}{r}{iter. time}   & 5.3 (0.8) & 24.4 (7.1) & 41.1 (74.9) & 759.4 (1362.1)\\
    	\bottomrule
    \end{tabular}
    \caption{Test case: Cubic domain, unstructured tetrahedral mesh, $\varepsilon=10^{-8}$. Number of PFCG($\alpha=2$) iterations, along with the setup and iteration CPU times (in seconds) for $k=0$. In brackets: analogous quantity without preconditioning.}
    \label{tbl:iterations_cube:tetra}
\end{table}

\clearpage
\section{Conclusion}

In this work, we extended the scheme of Glowinski \emph{et al.}\ to HHO discretizations, yielding an iterative method for the mixed solution of the biharmonic equation. 
Its main advantage lies in the fact that it can be implemented from an existing diffusion code with limited development costs. Namely, notwithstanding the PFCG solver, one requires only the implementation of \eqref{eq:lh_definition} and the right-hand side of \eqref{eq:discrete_subproblem2}. Additionally, for large problems, it allows the use of fast, elliptic solvers for an enhanced time to solution.
Future work will focus on the a priori error analysis of the proposed method and on improving the preconditioner, in order to obtain a convergence rate of the PFCG that is independent of the mesh size, and achieve a scalable behaviour.

\bibliographystyle{plainurl}
\bibliography{references}

\appendix

\section{Proof of \texorpdfstring{\cref{lem:link_condensed_hybrid}}{Lemma 1}} \label{annex}

Let $v_{\faces} \in \brokenPolyF{h}{k}$ and $\underline{w}_h \in \hybridSpace$. 
For all $T\in \cells$, $\rcvT v_{\facesT}$ is by definition solution of \eqref{eq:local_subproblem_1}. By linearity of $a_h$, combining both terms of \eqref{eq:local_subproblem_1} yields
\begin{equation*} 
    a_T(\underline{\Theta}_T v_{\facesT}, (w_T, 0)) = 0.
\end{equation*}
That being true for all $T\in \cells$ proves the global counterpart 
\begin{equation} \label{eq:local_solution_property_global}
a_h(\underline{\Theta}_h v_{\faces}, (w_{\cells}, 0)) = 0.
\end{equation}
By splitting $\underline{w}_h$ into $(w_{\cells}, 0) + (0, w_{\faces})$ and by linearity of $a_h$, we have
\begin{equation*}
a_h(\underline{\Theta}_h v_{\faces}, \underline{w}_h) = a_h(\underline{\Theta}_h v_{\faces}, (w_{\cells}, 0)) + a_h(\underline{\Theta}_h v_{\faces}, (0, w_{\faces})).
\end{equation*}
Property \eqref{eq:local_solution_property_global} allows to cancel the first term of the right-hand side.
Then, by writing $(0, w_{\faces}) = (\rcvtf w_{\faces}, w_{\faces}) - (\rcvtf w_{\faces}, 0) = \underline{\Theta}_h w_{\faces} - (\rcvtf w_{\faces}, 0)$ and using the linearity of $a_h$, the second term rewrites
\begin{equation} \label{eq:lem1:temp1}
    a_h(\underline{\Theta}_h v_{\faces}, (0, w_{\faces})) 
    =
    a_h(\underline{\Theta}_h v_{\faces}, \underline{\Theta}_h w_{\faces})
    -
    a_h(\underline{\Theta}_h v_{\faces}, (\rcvtf w_{\faces}, 0)).
\end{equation}
Using \eqref{eq:local_solution_property_global} again to cancel the last term of \eqref{eq:lem1:temp1} finishes the proof of \eqref{eq:link_condensed_hybrid}.

\end{document}